\theoremstyle{plain}
\newtheorem{theorem}{Theorem}[section]
\newtheorem{proposition}[theorem]{Proposition}
\newtheorem{lemma}[theorem]{Lemma}
\newtheorem{corollary}[theorem]{Corollary}
\numberwithin{equation}{section}
\theoremstyle{definition}
\newtheorem{remark}[theorem]{Remark}
\newtheorem{problem}[theorem]{Problem}
\newtheorem{algorithm}[theorem]{Algorithm}
\newcommand{\C}{\mathbb{C}}
\newcommand{\cP}{\mathcal{P}}
\newcommand{\cS}{\mathcal{S}}
\newcommand{\cF}{\mathcal{F}}
\newcommand{\cR}{\mathcal{R}}
\newcommand{\cK}{\mathcal{K}}
\newcommand{\cX}{\mathcal{X}}
\newcommand{\cG}{\mathcal{G}}
\newcommand{\cO}{\mathcal{O}}
\newcommand{\cZ}{\mathcal{Z}}
\newcommand{\cU}{\mathcal{U}}
\newcommand{\R}{\mathbb{R}}
\newcommand{\Z}{\mathbb{Z}}
\DeclareMathOperator{\Pic}{Pic}
\DeclareMathOperator{\GL}{GL}
\DeclareMathOperator{\MNF}{MNF}
\DeclareMathOperator{\Lk}{Lk}
\DeclareMathOperator{\wed}{wed}
\DeclareMathOperator{\cone}{cone}
\title[Toric colorable PL~spheres of Picard number~$4$]
{The characterization of $(n-1)$-spheres with $n+4$ vertices having maximal Buchstaber number}
\author{Suyoung Choi}
\address{Department of Mathematics, Ajou University, 206, World cup-ro, Yeongtong-gu, Suwon 16499,  Republic of Korea}
\email{schoi@ajou.ac.kr}
\author{Hyeontae Jang}
\address{Department of Mathematics, Ajou University, 206, World cup-ro, Yeongtong-gu, Suwon 16499, Republic of Korea}
\email{a24325@ajou.ac.kr}
\author{Mathieu Vall\'ee}
\address{Université Sorbonne Paris Nord, LIPN, CNRS UMR 7030, F-93430, Villetaneuse, France}
\email{vallee@lipn.fr}
\date{\today}
\subjclass[2020]{57S12 (primary), 14N10 (secondary), 14M25}
\keywords{PL~sphere,  simplicial sphere, toric manifold, Buchstaber number, real Buchstaber number, Picard number, weak pseudo-manifold, characteristic map, binary matroid, parallel computing, graphic processing unit programming}
\thanks{This work was supported by the National Research Foundation of Korea Grant funded by
the Korean Government (NRF-2019R1A2C2010989).}
\begin{document}
\begin{abstract}
    We present a computationally efficient algorithm that is suitable for graphic processing unit implementation. This algorithm enables the identification of all weak pseudo-manifolds that meet specific facet conditions, drawn from a given input set. We employ this approach to enumerate toric colorable seeds. Consequently, we achieve a comprehensive characterization of $(n-1)$-dimensional PL~spheres with $n+4$ vertices that possess a maximal Buchstaber number.

    A primary focus of this research is the fundamental categorization of non-singular complete toric varieties of Picard number~$4$.
    This classification serves as a valuable tool for addressing questions related to toric manifolds of Picard number~$4$.
    Notably, we have determined which of these manifolds satisfy equality within an inequality regarding the number of minimal components in their rational curve space.
    This addresses a question posed by Chen, Fu, and Hwang in 2014 for this specific case.
\end{abstract}
{\let\newpage\relax\maketitle}

\section{Introduction}
Our interest lies at the intersection of geometry, with the classification of non-singular complete toric varieties, and discrete mathematics, with the enumeration of piecewise linear (PL) spheres.
\subsection*{Toric geometry}
A \emph{toric variety} of complex dimension $n$ is a normal algebraic variety over the field of complex numbers $\C$ that admits an effective algebraic action of $(\C^\ast)^n$ having a dense orbit.
The \emph{fundamental theorem for toric geometry} states that the classification of toric varieties of complex dimension $n$ is equivalent to that of \emph{fans} in $\R^n$.
The \emph{cone} generated by a finite set of rational vectors $R \subset \R^n$ is $\cone(R) = \{\sum_{r\in R} \alpha_r r\colon \alpha_r\geq 0\}$.
A fan in $\R^n$ is a collection of cones that is closed under taking faces and such that the intersection of any pair of them is a face of both.
The one-dimensional cones of a fan are called the \emph{rays}.
The combinatorial structure of a fan $\Sigma$ in $\R^n$ having $m$ rays is represented by a pair $(K,\lambda)$ where:
\begin{itemize}
	\item $K$ is the underlying complex of $\Sigma$, with vertex set $[m]=\{1,\dots,m\}$, whose face lattice is isomorphic to that of $\Sigma$, and
	\item $\lambda \colon [m]\to\Z^n$ is a map that is one-to-one assigning vertices of $K$ to the primitive generator of the rays of $\Sigma$.
\end{itemize}
The fan is therefore given by $\Sigma = \{\cone \left( \{\lambda(i)\}_{i\in \sigma}\right)\colon \sigma\in K\}$.
In this article, we are interested in compact smooth toric varieties, simply \emph{toric manifolds}, that are characterized by \emph{complete non-singular} fans, whose pairs $(K,\lambda)$ satisfy:
\begin{itemize}
	\item $K$ is a simplicial complex, which is a PL~sphere, and
	\item $\lambda$ satisfies the \emph{non-singularity condition} for $K$: for any face $\{i_1, \ldots, i_k\}$ in $K$, the set $\{\lambda(i_1), \ldots, \lambda(i_k)\}$ is \emph{unimodular}, namely it is part of a basis of $\Z^n$.
\end{itemize}
For a simplicial complex $K$ on $[m]$ with $\dim(K)=n-1$, its \emph{Picard number} is $\Pic(K)=m-n$.
When $K$ is obtained from a complete non-singular fan, this number is the \emph{Picard number} of the associated toric manifold, see~\cite[Section~3.4]{Fulton1997book}.
Kleinschmidt~\cite{Kleinschmidt1988} and Batyrev~\cite{Batyrev1991} classified toric manifolds of ``small" Picard numbers~$2$ and $3$, respectively.

The classification of complete non-singular fans necessitates the classification of PL~spheres that can serve as their underlying simplicial complexes.
For Picard number~$\leq 3$, every PL~sphere is polytopal by Mani~\cite{Mani1972}, and can therefore be described using Gale diagrams~\cite{Grunbaum2003}.
Using this, one can characterize which PL~spheres support a complete non-singular fan~\cite{Gretenkort-Kleinschmidt-Sturmfels1990,Erokhovets2011}.

In this article, we take one more step and focus on the case of Picard number~$4$.
However, the same method is hardly applicable for Picard number~$4$ since $3$-dimensional Gale diagrams are difficult to use.
Moreover, a non-polytopal PL~sphere of Picard number~$4$ exists as shown in~\cite{Grunbaum-Sreedharan1967}, and there also exists a complete non-singular fan whose underling simplicial complex can be non-polytopal~\cite{Suyama2014}.
Therefore, we approach the problem using other combinatorial properties of PL~spheres, such as their property of being \emph{weak pseudo-manifolds}.

\subsection*{Enumeration of weak pseudo-manifolds and PL~spheres}
The enumeration of triangulations of manifolds has been a longstanding challenge since the end of the 19th century. The advances made in this area have provided valuable tools for researchers studying discrete and PL~geometry. Computer-assisted enumeration has been a major approach for tackling these problems and we follow this direction in the present article.

In particular, the aforementioned works focus on the enumeration of weak pseudo-manifolds, pseudo-manifolds, PL~manifolds, PL~spheres, and polytopal spheres.
Let $K$ be a simplicial complex on $[m]$.
It is \emph{pure} if its maximal faces are all of the same size.
These top dimensional faces are called the \emph{facets} of $K$, and are of size $\dim(K)+1$.
The faces of size $\dim(K)$ are called the \emph{ridges}.
A simplicial complex is a \emph{weak pseudo-manifold} if it is pure and every ridge is contained in exactly two facets.
Additionally, it is a \emph{pseudo-manifold} (without boundaries) if its ridge-facet graph is connected.
One example of a pseudo-manifold is the boundary of the $(n-1)$-simplex $\partial \Delta^{n-1}$ whose facets are the subsets of size $n$ of $[n+1]$, and has Picard number~$1$.
Any set of affinely independent points $v_1,\ldots,v_{n+1}\in\R^n$ yields a geometric realization $|\partial \Delta^{n-1}|$ of $\partial \Delta^{n-1}$ that is homeomorphic to the sphere $S^{n-1}$.
A simplicial complex $K$ of dimension $n-1$ is a PL~sphere if there exists a subdivision of $K$ and a subdivision of $\partial \Delta^{n-1}$ such that these subdivisions are isomorphic.
It is a \emph{PL~manifold} if the link of each of its faces is a PL~sphere.
A \emph{polytopal sphere} is the boundary complex of a simplicial polytope.
We have the following hierarchy on simplicial complexes:
\begin{center}
	\begin{tikzpicture}[node distance=0.165\textwidth]
	\node[,text width=1.8cm] (A) {polytopal spheres};
	\node[right of=A,text width=2cm] (B) {PL~spheres};
	\draw[draw=white] (A) -- node {$\subseteq$} (B);
	\node[right of=B] (C) {PL~manifolds};
	\draw[draw=white] (B) -- node {$\subseteq$} (C);
	\node[right of=C,text width=2cm] (D) {pseudo-manifolds};
	\draw[draw=white] (C) -- node {$\subseteq$} (D);
	\node[right of=D,text width=2cm] (E) {weak pseudo-manifolds};
	\draw[draw=white] (D) -- node {$\subseteq$} (E);
	\node[right of=E,text width=2cm] (F) {pure\\ simplicial complexes.};
	\draw[draw=white] (E) -- node {$\subseteq$} (F);
\end{tikzpicture}
\end{center}
The notable advance in enumerating these simplicial complexes progresses along two directions: small dimensions and small Picard numbers.
It is well-known that PL~spheres of dimension~$2$ are equivalent to $3$-connected planar graphs, and they can be generated using the \texttt{plantri} algorithm, as demonstrated in~\cite{Brinkmann2007}, for up to $23$ vertices.
Enumerations of polytopal or non-polytopal PL~spheres of dimension~$3$ with $8$ and $9$ vertices are provided in~\cite{Altshuler_Steinberg_1976, Altshuler1985}, following the work in~\cite{Grunbaum-Sreedharan1967}, and~\cite{Altshuler_Bokowski_Steinberg_1980}, respectively.
While the enumeration of PL~spheres of Picard number~$\leq 3$ has been accomplished in \cite{Grunbaum2003}, the enumeration for Picard number~$4$ remains an open problem.

The enumerations of all weak pseudo-manifolds of dimension~$2$ with $7$ and $8$ vertices are documented in~\cite{Datta_1999} and~\cite{Datta_Nilakantan_2002}, respectively.
Lutz and Sulanke extensively used a new method based on lexicographic enumeration to obtain (weak) pseudo-manifolds of dimensions~$2$ and~$3$, with up to $12$ and $11$ vertices, as detailed in~\cite{Lutz2008,Lutz2009}.
Additionally, in~\cite{Bagchi_Datta_1998}, a characterization of pseudo-manifolds of Picard number~$\leq 3$ is provided.

We challenge the enumeration of PL~spheres and weak pseudo-manifolds of Picard number~$4$.
In this paper, we introduce a new method that consists in representing a pure simplicial complex as a $\{0,1\}$-vector, allowing for the use of linear algebra for fast computations, and adaptability to graphic processing unit (GPU) programming.
In Section~\ref{section:graphic processing unit algorithm}, readers can find an explicit algorithm described in the Compute Unified Device Architecture (CUDA) language for enumerating weak pseudo-manifolds whose facets are in a given input set and satisfy affine conditions on the associated vector.


\subsection*{(Real) Buchstaber number and classification problems in toric topology}
Without any assumption on the simplicial complex $K$ on $[m]$, we construct a topological space called the \emph{polyhedral product} $ (\underline{X}, \underline{Y})^K $ of $K$ with respect to a pair $(X, Y)$ of topological spaces which is
$$
(\underline{X}, \underline{Y})^K \coloneqq \bigcup_{\sigma \in K} \left\{ (x_1, \ldots, x_m) \in X^m \mid x_i \in Y \text{ when } i \notin \sigma \right\}.
$$
The \emph{moment-angle complex} $\cZ_K$ of $K$ is $(\underline{D^2}, \underline{S^1})^K$, and the \emph{real moment-angle complex} $\R\cZ_K$ of $K$ is $(\underline{D^1}, \underline{S^0})^K$, where $D^d$ represents the $d$-dimensional disk, and $S^{d-1}$ denotes its boundary sphere of dimension $d-1$.
We observe that the $T^1$-action on the pair $(D^2, S^1)$ yields the canonical action of the $m$-dimensional torus $T^m = (S^1)^m$ on $\cZ_K$.
The \emph{Buchstaber number} $s(K)$ is the maximal integer $r$ for which there exists a subtorus of rank $r$ acting freely on $\cZ_K$.

Similarly, there is an $S^0$-action on the pair $(D^1, S^0)$.
For clarity and consistency in our terminology throughout this paper, we will treat $S^0$ as the additive group $\Z_2 = \Z/2\Z$ with two elements $\{ 0, 1\}$.
This yields the canonical $\Z_2^m$-action on $\R\cZ_K$.
The \emph{real Buchstaber number} $s_\R(K)$ is defined by the maximal rank $r$ of a subgroup of $\Z_2^m$ acting freely on $\R\cZ_K$.
Determination of the Buchstaber and real Buchstaber numbers is one of the central questions in toric topology, and it has been actively studied in many literatures such as in \cite{Fukukawa2011, Erokhovets2014, Hasui2015, Ayzenberg2016, Baralic2022}.

It is noteworthy that when an $r$-dimensional subtorus $H$ of $T^m$ acts freely on $\cZ_K$, the resulting quotient space $\cZ_K /H$ supports a well-behaved torus action $T^m/H \cong T^{m-r}$ with an orbit space that exhibits a reverse face structure isomorphic to $K$.
It is known that $s(K)\leq s_\R(K) \leq \Pic(K)$, see \cite{Erokhovets2014}.
In particular, a toric manifold associated with $K$ is topologically obtainable from the quotient of $\cZ_K$ by a free action of subtorus of dimension $m-n$, see \cite{Davis-Januszkiewicz1991,Buchstaber-Panov2015} for details.
Consequently, considering PL~spheres whose Buchstaber number is maximal, that is equal to $m-n$, is of significant importance.

A simplicial complex of dimension $n-1$ is said to be \emph{toric colorable} if it has a maximal Buchstaber number, and $\Z_2^n$-\emph{colorable} if it has a maximal real Buchstaber number.
We have the following hierarchy on PL~spheres:
\begin{center}
	\begin{tikzpicture}[node distance=0.2\textwidth]
	\node(A) {fan-like};
	\node[right of=A] (B) {toric colorable};
	\draw[draw=white] (A) -- node {$\subseteq$} (B);
	\node[right of=B] (C) {$\Z_2^n$-colorable,};
	\draw[draw=white] (B) -- node {$\subseteq$} (C);
\end{tikzpicture}
\end{center}
where a \emph{fan-like} PL~sphere is a simplicial complex that is the underlying complex of some complete non-singular fan.
Therefore, the first step for classifying toric manifolds of Picard number~$4$ is to characterize toric colorable PL~spheres of Picard number~$4$.
This derives the following classification problem.

\begin{problem} \label{problem:main}
	Find which PL~spheres of Picard number~$4$ have (real) Buchstaber number~$4$.
\end{problem}

The key tool for obtaining the answer lies in the finiteness of the problem.
This result stems from Choi and Park~\cite{CP_wedge_2}, who established that there exists a finite set of PL~spheres, called toric colorable \emph{seeds}, from which all toric colorable PL~spheres can be derived through iterated \emph{wedge operations} (this operation is also referred to as the \emph{$J$-construction} in~\cite{BBCG2010}).
The maximal number of vertices of a toric colorable seed of Picard number $p\geq 3$ is $2^p-1$.
Consequently, we only need to enumerate the toric colorable seeds up to $n=11$.

In this paper, we use that toric colorable seeds must belong to certain binary matroids, as detailed in Section~\ref{section:preparation}.
This restricts the number of facets inputted into our GPU algorithm, allowing us to obtain results for $n$ up to $10$.
In addition, we mathematically address the extreme case $n=11$ to further reduce the algorithmic complexity and derive the following main theorem.
\begin{restatable*}{theorem}{main} \label{thm:main}
	Up to isomorphism, the number of toric (or $\Z_2^n$-)colorable seeds of dimension~$n-1$ and Picard number~$p\leq 4$ is as follows:
	\begin{center}
		\begin{tabular}{l*{12}{c}r}
			\toprule
			$p\backslash\:\: n$ &$1$ & $2$ & $3$ & $4$ & $5$ & $6$ & $7$ & $8$ & $9$ & $10$ & $11$ & $>11$& total \tabularnewline \midrule
			$1$&$1$&  &&&&&&&&&&&$1$\tabularnewline
			$2$ && $1$ & & &&&&&&&&& $1$\tabularnewline
			$3$ && $1$ & $1$& $1$& & & & &&&& &$3$\tabularnewline
			$4$ && $1$ & $4$ & $21$ & $142$ & $733$ & $1190$ & $776$ & $243$& $39$ & $4$ & &$3153$\tabularnewline
			\bottomrule
		\end{tabular}
	\end{center}
	with the empty slots displaying zero.
\end{restatable*}

The database containing the toric colorable seeds of Picard number~$4$, the CUDA script, and a C++ version of the script (which we used for both performance comparison and verification purposes) are available on the third author's Github repository:
\begin{center}
\url{https://github.com/MVallee1998/GPU_handle}
\end{center}

From~\cite{CP_wedge_2}, we obtain this corollary of Theorem~\ref{thm:main} which completely solves Problem~\ref{problem:main}.
\begin{corollary}\label{corollary:main} The toric (or $\Z_2^n$-)colorable PL~spheres of dimension $n-1$ and Picard number~$p\leq 4$ are precisely those obtained after consecutive wedge operations on the toric (or $\Z_2^n$-)colorable seeds (of Theorem~\ref{thm:main}).
\end{corollary}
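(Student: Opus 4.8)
The plan is to derive the corollary directly from Theorem~\ref{thm:main} together with the structural results of Choi and Park~\cite{CP_wedge_2}. The two facts I would quote from~\cite{CP_wedge_2} are: (i) the wedge operation preserves PL~sphere-hood and both preserves and reflects the property of having maximal (real) Buchstaber number, so a PL~sphere $K$ is toric colorable (resp.\ $\Z_2^n$-colorable) if and only if every wedge of $K$ is; and (ii) every toric colorable (resp.\ $\Z_2^n$-colorable) PL~sphere is obtained from a toric colorable (resp.\ $\Z_2^n$-colorable) \emph{seed} by finitely many wedge operations, a seed being a PL~sphere of the given type that is not a nontrivial wedge of a smaller one.

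Before combining these, I would record the elementary observation that the wedge operation leaves the Picard number fixed: applying a wedge to a PL~sphere of dimension $n-1$ with $m=n+p$ vertices yields a PL~sphere of dimension $n$ with $m+1=(n+1)+p$ vertices, so $\Pic$ is a wedge invariant. Consequently, if $K$ has dimension $n-1$, Picard number $p\leq 4$, and is toric colorable (resp.\ $\Z_2^n$-colorable), then by (ii) it is a sequence of wedges applied to a seed $K_0$ of the corresponding type, and by the invariance $\Pic(K_0)=p\leq 4$, so $K_0$ is one of the seeds enumerated in Theorem~\ref{thm:main}. Conversely, if $K$ is obtained by consecutive wedges from a seed $K_0$ listed in Theorem~\ref{thm:main}, then $K_0$ is toric colorable (resp.\ $\Z_2^n$-colorable) by that theorem, and applying (i) once for each wedge shows $K$ is toric colorable (resp.\ $\Z_2^n$-colorable). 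These two implications give the asserted equality of classes.

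The one point that deserves attention is that the statement is genuinely simultaneous in the toric and real ($\Z_2^n$) cases. This relies on the fact, built into Theorem~\ref{thm:main}, that for Picard number $\leq 4$ the toric colorable seeds and the $\Z_2^n$-colorable seeds coincide (they are enumerated by the same table), and on the fact that the wedge statement (i)--(ii) of~\cite{CP_wedge_2} holds verbatim for real Buchstaber numbers. Granting these, the corollary is a formal consequence of Theorem~\ref{thm:main} and no further obstacle arises; the ``hard work'' has already been done in proving Theorem~\ref{thm:main} itself and in establishing the wedge theory of~\cite{CP_wedge_2}.
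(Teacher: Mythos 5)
Your proof is correct and follows essentially the same route as the paper, which derives the corollary directly from the wedge theory of Choi--Park~\cite{CP_wedge_2} (seeds generate all toric/$\Z_2^n$-colorable PL~spheres via wedges, colorability is preserved and reflected by wedging, and the Picard number is a wedge invariant) combined with the seed list of Theorem~\ref{thm:main}, including the coincidence of the toric and $\Z_2^n$-colorable seeds established there.
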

In summary, the set of toric (or $\Z_2^n$-)colorable PL~spheres of dimension $n-1$ and Picard number~$p\leq 4$ is ``finitely generated" through multiple wedge operations on the explicit  $1+1+3+3153$ seeds outlined in Theorem~\ref{thm:main}.


\subsection*{Application to the space of rational curves on toric manifolds}
We anticipate that our theorem contributes to the understanding of toric manifolds of Picard number~$4$.
For instance, in this paper, we employ it to address a question posed by Chen, Fu, and Hwang\cite{fu2014minimal} in 2014 for this specific case.

Let $X$ be a toric manifold whose corresponding fan has $m$ rays and $\text{RatCurves}(X)$ the normalized space of rational curves on $X$.
Fix an irreducible component $\cK$ of $\text{RatCurves}(X)$.
Then we have a universal family $\rho \colon \cU \longrightarrow \text{RatCurves}(X)$, which is a complex projective line bundle, and $\mu \colon \cU \longrightarrow X$, which is an evaluation map.
The component $\cK$ is called \emph{minimal} if $\mu$ is dominant and $\mu^{-1}(x)$ is complete for a general point $x \in X$.
The \emph{degree} of $\cK$ is defined as the degree of the intersection of the anti-canonical divisor of $X$ with any member in $\cK$.
In \cite{fu2014minimal}, it is shown that the sum of the degrees of all minimal components is less than or equal to $m$, and it is asked when the equality holds.
In Section~\ref{sec:min_comp} we answer this question for every toric manifold of Picard number~$4$.

\section{Classification of weak pseudo-manifolds by graphic processing unit computing} \label{section:graphic processing unit algorithm}
In this section, we provide a general approach on how to use graphic processing unit (GPU) parallel computing capability for classifying weak pseudo-manifolds with given properties.

\subsection{Enumerating weak pseudo-manifolds}\label{subsection:enum_weak_psdmfd}
Let $K$ be a pure simplicial complex of dimension $n-1$ on the vertex set $[m] = \{1, 2, \ldots, m\}$.
A \emph{facet} of $K$ is a face of size $n$, and a \emph{ridge} is a face of size $n-1$.
We denote by $\cF(K)$ and $\cR(K)$ the sets of facets and ridges of $K$, respectively.
We will often use the words facets and ridges without specifying a simplicial complex when referring to a subset of size $n$ and a subset of size $n-1$ of $[m]$.
Technically, they refer to the facets and ridges of the simplicial complex whose facets are the subsets of size $n$ of $[m]$.
We shall provide an algorithm as follows:

\begin{algorithmic}
    \algrenewcommand\algorithmicrequire{\textbf{Input:}}
	\algrenewcommand\algorithmicensure{\textbf{Output:}}
	\Require{ A set $\cF$ of facets, and a collection $\cG$ of affine functions on the subsets of $\cF$, called \emph{properties}.}
	\Ensure{ The set of weak pseudo-manifolds $K$ such that $\cF(K)\subseteq \cF$ and $g(\cF(K))>0$ for all $g\in\cG$, namely, $K$ \emph{satisfies} all the properties.}
\end{algorithmic}

Provided any set of facets $\cF = \{F_1,\ldots,F_M\}$, we can compute the set $\cR=\{r_1,\ldots,r_N\}$ of all ridges that come from these facets.
We then construct the \emph{ridge-facet incidence matrix} $A(\cF) = (a_{i,j})$ of size $N \times M$ as follows:

$$a_{i,j} = \begin{cases} 1 & r_i\subset F_j\\ 0 & \text{otherwise}\end{cases},$$
for $i=1,\ldots,N$ and $j=1,\ldots,M$.
A simplicial complex $K$ whose facets are all in some set of facets $\cF=\{F_1,\ldots,F_M\}$ can be regarded as a \emph{characteristic vector} $K =(k_1, \ldots, k_M)^t \in\Z^M$ with

$$k_j = \begin{cases} 1 & F_j\in K\\ 0 &  F_j\notin K\end{cases},$$
for $j=1,\ldots,M$.
The pure simplicial complex $K$ is a \emph{weak pseudo-manifold} if any ridge of $K$ is in exactly two facets of $K$. That reflects in the following property:
\begin{proposition}\label{proposition:weak_pseudo-mfd}
	Let $\cF$ be a set of facets, $A=A(\cF)$ the ridge-facet incidence matrix of $\cF$, and $K$ a pure simplicial complex whose facets are all in $\cF$. Then $K$ is a weak pseudo-manifold if and only if the coordinates of the product $AK$ are all in $\{0,2\}$.
\end{proposition}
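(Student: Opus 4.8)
The plan is to compute the $i$-th coordinate of the vector $AK$ directly from the definitions and read off what the weak pseudo-manifold condition says about it. Fix a ridge $r_i \in \cR(\cF)$. Since $A = (a_{i,j})$ with $a_{i,j} = 1$ exactly when $r_i \subset F_j$, and $K = (k_1,\dots,k_M)^t$ with $k_j = 1$ exactly when $F_j$ is a facet of $K$, the $i$-th coordinate of $AK$ is
$$
(AK)_i = \sum_{j=1}^{M} a_{i,j} k_j = \#\{\, j \in [M] : r_i \subset F_j \text{ and } F_j \in \cF(K)\,\}.
$$
In other words, $(AK)_i$ counts exactly the facets of $K$ that contain the ridge $r_i$. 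This identity is the crux of the argument, and it is essentially a bookkeeping statement; I do not anticipate a real obstacle here, only the need to be careful that $\cR$ really does list \emph{all} ridges arising from facets in $\cF$ (which is how $\cR$ was constructed), so that no ridge of $K$ is omitted from the coordinates of $AK$.

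Next I would translate the weak pseudo-manifold condition through this identity. By definition (recalled in the excerpt), the pure complex $K$ is a weak pseudo-manifold if and only if every ridge of $K$ lies in exactly two facets of $K$. A subset $r_i$ of size $n-1$ is a ridge of $K$ precisely when it is contained in at least one facet of $K$, i.e. precisely when $(AK)_i \geq 1$; and for such an $r_i$ being in ``exactly two facets'' of $K$ means $(AK)_i = 2$. Conversely, if $(AK)_i = 0$ then $r_i$ is not a face of $K$ at all, so it imposes no constraint. Hence the condition ``every ridge of $K$ lies in exactly two facets of $K$'' is equivalent to ``$(AK)_i \in \{0,2\}$ for every $i$'': the value $1$ is forbidden (a ridge in only one facet violates the definition), the values $\geq 3$ are forbidden (too many facets), and $0$ and $2$ are exactly the allowed cases. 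I would also note that, for $K$ pure, every facet does have size $n$ and hence its codimension-one faces are genuine ridges of size $n-1$, so the set of ridges of $K$ is indeed $\{\, r_i : (AK)_i \geq 1 \,\}$ and nothing is lost by restricting attention to the ridges enumerated in $\cR$.

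Putting the two steps together gives the claimed equivalence: $K$ is a weak pseudo-manifold $\iff$ all coordinates of $AK$ lie in $\{0,2\}$. I expect no genuine difficulty; the only point requiring a sentence of care is the direction ``$(AK)_i \in\{0,2\}$ for all $i$ $\Rightarrow$ weak pseudo-manifold,'' where one must observe that if $r$ is any ridge of $K$ then $r$ appears among the $r_i$ (since $r$ is contained in a facet of $K \subseteq \cF$) and $(AK)_i \geq 1$ forces $(AK)_i = 2$, so $r$ lies in exactly two facets of $K$, as required.
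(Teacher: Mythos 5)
Your argument is correct: the identity $(AK)_i = \#\{F_j \in \cF(K) : r_i \subset F_j\}$ together with the observation that, by purity, the ridges of $K$ are exactly the $r_i$ with $(AK)_i \geq 1$ gives the equivalence, and this is precisely the justification the paper intends — it states Proposition~\ref{proposition:weak_pseudo-mfd} without proof as an immediate reflection of the definition. No gaps; your care about every ridge of $K$ appearing in $\cR$ (since $\cF(K)\subseteq\cF$) is the only point worth spelling out, and you did.
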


From that, in $\Z_2^M$, the characteristic vectors of weak pseudo-manifolds are all included in the $\Z_2$-kernel of the matrix $A$, seen as a linear map $A\colon \Z_2^M \to \Z_2^N$.

Let $B=\begin{bmatrix} K_1 &\cdots & K_s\end{bmatrix}$ be a matrix whose columns form a $\Z_2$-basis of $\ker_{\Z_2} A$.
Every weak pseudo-manifold $K$ is uniquely expressed as one of the $2^s$ possible $\Z_2$-linear combinations of $K_1,\ldots,K_s$, namely $K=\sum_{i=1}^s x_i K_i \pmod{2}= BX$, for $X=(x_1,\ldots,x_s)^t\in\Z_2^s$.
We find a suitable basis $\widetilde{K}_1,\ldots,\widetilde{K}_s$ to reduce the number of cases to compute.

We first explain how to construct this basis when the set $\cF=\binom{[m]}{n}$ contains all possible facets of $[m]$ and $\cR=\binom{[m]}{n-1}$ all the ridges.
There are $\binom{m}{n}$ facets and $\binom{m}{n-1}$ ridges.
For a ridge $r$, we will write as $(AK)_r$ the coordinate of $AK$ corresponding to $r$.
Let us denote by $\cP(r):=\{j\in[M] \colon r\subset F_j\}$ the set of the indexes in $\cF$ of the facets containing $r$, called the \emph{parents} of $r$, that are the only facets contributing to $(AK)_r$.
In this first case, any ridge has $m-n+1$ parents.
For a kernel matrix $B$ whose row are indexed by $\cF$, let us denote by $B_{\cP(r)}$ the matrix whose rows are the ones of $B$ taken at indexes $\cP(r)$.
For every $r\in\cR$, for every $t=1,\ldots,s$, the $t$th column of $B_{\cP(r)}$ has an even number of ones since the basis element $K_t$ has an even number of facets containing $r$.
Performing a mod~$2$ Gaussian elimination on the columns of $B_{\cP(r)}$ yields a matrix of the following form

$$B_{\cP(r)} E=\begin{bmatrix}Z_{m-n}&\mathbf{0}\end{bmatrix},$$
with the $(k+1)\times k$-matrix

$$ Z_k = \begin{bmatrix}
	1&0&\cdots&0\\
	0& 1 & \ddots &\vdots\\
	\vdots&\ddots&\ddots&0\\
	0&\cdots&0&1\\
	1&\cdots&1&1
\end{bmatrix},$$
for some integer $k$, and $E\in \GL(s,\Z_2)$ corresponding to the operations performed in the Gaussian elimination.
The columns of the new matrix $B E$ correspond to a convenient basis of the $\Z_2$-kernel of $A$.
Indeed, only its first $m-n$ columns have facets contributing to $(AK)_r$.
Moreover, taking the mod~$2$ linear combination of strictly more than two of them makes $(AK)_r$ be strictly greater than $2$, which is a case we want to avoid computing since we focus on weak pseudo-manifolds, see Proposition~\ref{proposition:weak_pseudo-mfd}.
Thus, this decreases the number of $\Z_2$-linear combinations containing the first $m-n$ new generators that we need to compute from $2^{m-n}$ to $\binom{m-n}{0}+\binom{m-n}{1}+\binom{m-n}{2}$.

By writing $r^1:=r$ and $E_1:=E$, one can inductively repeat the latter process by taking care at step $k+1$ of:
\begin{itemize}
	\item choosing each time a new ridge $r^{k+1}$ such that for all $i=1,\ldots,k,\cP(r^i)\cap\cP(r^{k+1})=\emptyset$, and
	\item starting the Gaussian pivot at columns index $k(m-n)+1$ so that the structure of the generators of previous columns is not lost.
\end{itemize}
This process terminates at some step $k_{\max}$ whenever one of the former conditions cannot be satisfied.
We obtain a final matrix, whose columns are the new basis elements $\widetilde{K}_1,\ldots,\widetilde{K}_s$, and, up to reordering, whose rows are according to the sets $\cP(r^1),\ldots,\cP(r^{k_{\max}})$ looks as follows:

$$BE_1\cdots E_{k_{\max}} = \begin{bmatrix}
	Z_{m-n}& 0 & \hdots & \hdots & 0 \\
	0 & Z_{m-n} & \ddots &  & \vdots \\
	\vdots & \ddots & \ddots & \ddots & \vdots \\
	0 & \hdots & 0 &Z_{m-n} & 0 \\ \hline
	\star & \star & \star & \star & \star \\
\end{bmatrix} = \begin{bmatrix}
	\widetilde{K}_1&\cdots&\widetilde{K}_s
\end{bmatrix}.$$
In this case, we decrease the total number of $\Z_2$-linear combinations from $2^s$ to $(1+(m-n) + \binom{m-n}{2})^{k_{\max}} 2^{s-k_{\max}(m-n)}$ since we should take at most $2$ basis elements for each block $Z_{m-n}$ and there remains $s-k_{\max}(m-n)$ columns $\widetilde{K}_i$ after these blocks.

As for the general case, there may be ridges having less than $m-n+1$ parents.
In this case, we try to wisely choose some ridges $r^1,\ldots,r^{k_{\max}}$ such that the blocks $Z_k$ are of the maximum possible size so we minimize the number of $\Z_2$-linear combinations $BX$ of the generators we need to compute.
That provides a partition $I_1,\ldots,I_l$ of $\{1,\ldots,s\}$ such that if we are to sum more than two basis elements with indexes in $I_k$, for $k=1,\ldots,l$, we are sure not to obtain a weak pseudo-manifold.
We can split the vector $X$ in the $\Z_2$-linear combinations $BX$ as blocks according to this partition: $X =\sum_{k=1}^l X_k$, with $X_k$ representing the part of $X$ whose only nonzero coordinates are in $I_k$.
Let us denote by $\cX_k$ the set of all such possible $X_k$ for $k=1,\ldots,l$.

If we recap our process, given a set of facets $\cF$, we constructed
\begin{itemize}
	\item the ridge-facet incidence matrix $A$ whose $\Z_2$-kernel contains all weak pseudo-manifolds,
	\item a matrix $B$ whose columns form a convenient basis $\widetilde{K}_1,\ldots,\widetilde{K}_s$ of $\ker_{\Z_2}(A)$,
	\item a partition $I_1,\ldots,I_l$ of $\{1,\ldots,s\}$,
	\item the sets $\cX_1,\ldots,\cX_l$ of partitions of the vectors of $\Z_2^s$ such that for all $k=1,\ldots,l$, $X_k\in\cX_k$ has a maximum of two nonzero coordinates which are all in $I_k$,
\end{itemize}
such that any weak pseudo-manifold whose facets are in $\cF$ is of the form $K = BX$, with $X=\sum_{k=1}^l X_k$ for some $(X_1,\ldots,X_l)\in \cX(\cF) =\cX_1\times\cdots\times \cX_l$, satisfying that the entries of $AK$ corresponding to the chose ridges are in $\{0,2\}$.
Moreover, given any affine function $K\mapsto g(K)$, it is easy to check using computer programming that $g(K)>0$ is verified.


\subsection{Generalities about GPU programming}\label{subsection:cuda}
In this article, we used \emph{Nvidia Compute Unified Device Architecture} (CUDA) \cite{cuda}.
One will notice that the syntax and vocabulary may differ from other GPU languages.

The general idea behind GPU computing is that it allows parallelizing tasks with two layers of parallel programming without requiring a supercomputer.
Parallel programming takes several forms, and the two we will use are the following.
\begin{itemize}
	\item Data parallelism: one has a list of elements $\cX$ and wants to apply the same function $g$ to every element $X\in\cX$. In this case, each call of the function $g$ is independent.
	\item Task parallelism: one has an element $X$ and wants to apply a set of similar functions $g_1,\ldots,g_k$ on $X$ in order to obtain the result as a list $(g_1(X),\ldots,g_k(X))$.
	The simplest example is a matrix product $AX$, and if each row of $A$ is denoted by $a_i$, then the functions $g_i$ are the inner products with the $a_i$s.
\end{itemize}

In all that follows, a \emph{thread (of execution)} will be a processing unit that computes machine operations linearly, and a GPU will be a two-layered structure of threads.
Namely, a GPU will be a set of $p$ \emph{grids}, and each grid will be a set of $q$ threads.
Therefore a GPU can be seen as $p\times q$ threads organized for parallel programming, as in Figure~\ref{figure:GPU}.
The number $p\times q$ of GPU threads that can run simultaneously is roughly the number of CUDA cores (if we consider Nvidia GPUs) and is around seventeen thousand for the current architectures (as of 2024).
Thus a single GPU would be approximately equivalent to at least a thousand central processing unit (CPU) threads.

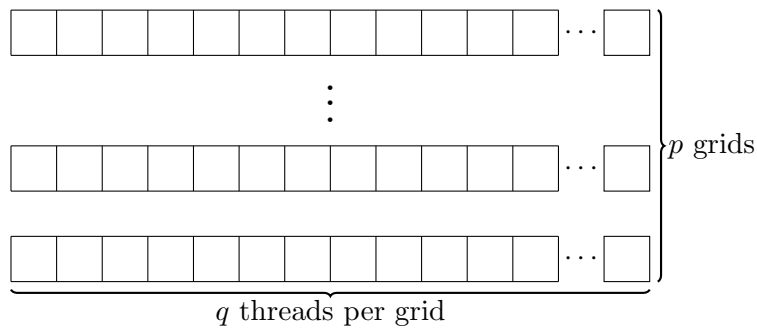
\begin{figure}[ht]
	\begin{tikzpicture}[scale =0.6]
		
		\draw (0,0) -- (12,0);
		\draw (0,1) -- (12,1);
		\foreach \x in {0,1,...,12} \draw (\x,0) -- (\x,1);
		\draw (13,0) -- (14,0) -- (14,1) -- (13,1) -- cycle;
		\draw (12.55,0.5) node {$\cdots$};
		\draw [decorate,decoration = {brace,mirror},thick,yshift = -5pt] (0,0) -- node[below] {$q$ threads per grid} (14,0);
		
		\draw (0,2) -- (12,2);
		\draw (0,3) -- (12,3);
		\foreach \x in {0,1,...,12} \draw (\x,2) -- (\x,3);
		\draw (13,2) -- (14,2) -- (14,3) -- (13,3) -- cycle;
		\draw (12.55,2.5) node {$\cdots$};
		
		\draw (7,4.2) node[scale=1.5] {$\vdots$};
		
		\draw (0,5) -- (12,5);
		\draw (0,6) -- (12,6);
		\foreach \x in {0,1,...,12} \draw (\x,5) -- (\x,6);
		\draw (13,5) -- (14,5) -- (14,6) -- (13,6) -- cycle;
		\draw (12.55,5.5) node {$\cdots$};
		
		\draw [decorate,decoration = {brace,mirror},thick,xshift = 5pt] (14,0) -- node[right] {$p$ grids} (14,6);
	\end{tikzpicture}
	\caption{The two layered parallel structure of a GPU.}
	\label{figure:GPU}
\end{figure}

In CUDA programming we use this two-layered structure as follows:
\begin{itemize}
	\item \textbf{First layer (blocks):} Let $\cX=\{X_1,\ldots,X_N\}$ be the set of data on which we want to apply the same function $g$, called the \emph{kernel}.
	We create some list of $N$ \emph{blocks} indexed by an integer $i$.
	Each block embodies the function call $g(X_i)$.
	A block has three possible states: \emph{on hold}, \emph{active}, and \emph{completed}.
	In the beginning, every block is on hold.
	Then the $p$ grids of the GPU are filled with some blocks which will be running, these are active, and the rest are waiting to be launched on the grid and remain on hold.
	Whenever some active block has completed, the GPU replaces it with a block on hold.
	The program terminates when all blocks are completed.
	\item \textbf{Second layer (threads):} Whenever we send a block to a grid, the operations made in the block are split into threads using task parallelism, and we distribute any procedures in $g$ into $q$ functions which will run simultaneously on all $q$ threads of the grid.
	Notice that we need every thread to finish its tasks to obtain the result.
	We can explicitly require this condition by \emph{synchronizing} the threads.
\end{itemize}
\medskip

In all that follows, we will use such notations:
\begin{itemize}
	\item a set $\cX$ is denoted as a list \texttt{list\_X},
	\item a matrix $A = [a_{i,j}]$ is represented as an array \texttt{A} whose entry at index $i,j$ is \texttt{A[i][j]}$=a_{i,j}$, and
	\item a binary vector $X\in\Z_2^k$ is represented as a binary variable \texttt{x} on $k$ bits.
\end{itemize}

We will use the following processor instructions on binary variables \cite{cuda}:
\begin{itemize}
	\item the \emph{bitwise and} operation \texttt{x\&y}, 64 operations per cycle,
	\item the \emph{bitwise exclusive-or} operation \texttt{x\^{}y}, 64 operations per cycle,
	\item the \emph{population count} operation \texttt{popcount(x)} which counts the number of ``1" bits, called \emph{active} bits, in the value of \texttt{x}, 16-32 operations per cycle, and
	\item \emph{atomic operations}, that we use to avoid memory access errors when many threads may want to write at the same memory location concurrently. The processor scheduler creates a queue of all atomic operation calls.
\end{itemize}
A \emph{cycle} is the shortest time interval considered in a processor unit that it performs at its frequency.
If the frequency is 1GHz the processor realizes $10^9$ cycles per second.

The thread synchronization allows us to manage how the threads behave in parallel as follows:
\begin{itemize}
	\item The \texttt{syncthreads()} command asks all the threads to wait for each others and come across the same line in the algorithm code of the kernel.
	\item For a local thread variable \texttt{test}, the \texttt{syncthreads\_and(test)} and \texttt{syncthreads\_or(test)} command allows us to manage the \textit{and} and the \textit{or} operation over all of the variables \texttt{test} existing in each thread of a grid.
	For example, if a thread encounters a condition that should stop the current case in a loop, then all the threads should stop at once since it is useless to compute this case.
\end{itemize}

\subsection{The GPU algorithm for classifying weak pseudo-manifolds}\label{subsection:gpu_algo}

To simplify our explanations, we suppose that there are $s=64$ generators and that we can write the product $\cX_1\times\cdots\times \cX_l$ as $\cX_a\times \cX_b$ such that $\cX_a$ and $\cX_b$ describe the 32 first and last generators, respectively. We thus decompose $K$ as $K_a+K_b$, with $K_a = B X_a$ and $K_b = B X_b$ for every $(X_a,X_b)\in\cX_a\times\cX_b$.
Both vectors $X_a$ and $X_b$ are binary vectors whose nonzero coordinates are in the $32$ first or last coordinates, respectively, which we store as $32$ bits variables \texttt{xa} and \texttt{xb}, namely as unsigned integers.

The dot product in $\Z$ of the binary forms \texttt{x} and \texttt{y} of two integers $x$ and $y$, respectively, is the number \texttt{popcount(x\&y)} of active bits of the \emph{bitwise and} operation.
Its mod~$2$ reduction is \texttt{popcount(x\&y)\&1}, the value of its least significant bit.

The main idea of the algorithm is to use $M$ threads to compute each coordinate of $K\in\Z_2^M$, with $M$ being the number of facets in $\cF$, as provided in Algorithm~\ref{algorithm:main}.
\begin{algorithm}\label{algorithm:main}
	The GPU algorithm for enumerating weak pseudo-manifolds.
	
	\begin{algorithmic}[1]
		\algrenewcommand\algorithmicrequire{\textbf{Input:}}
		\algrenewcommand\algorithmicensure{\textbf{Output:}}
		\algrenewcommand\algorithmicprocedure{\textbf{Procedure}}
		\algrenewcommand\algorithmicfunction{\textbf{Function}}
		\Require{The list \texttt{list\_F}, corresponding to the set of facets $\cF$, and the list \texttt{list\_G}, corresponding to the set of affine functions $\cG$.}
		\Ensure{The list \texttt{list\_K} of weak pseudo-manifolds \texttt{K} with facets in \texttt{list\_F} and that satisfy \texttt{g(K)>0} for every \texttt{g} in \texttt{list\_G}.}
		\Procedure{Initialization}{}
		\State Compute the ridge-facet incidence matrix $A=A(\cF)\in\Z_2^{N\times M}$ and store it in \texttt{A}, a column sparse matrix: \texttt{A[k][i]} represents the index of the \texttt{k}th nonzero coordinate of the \texttt{i}th column of $A$.
		\State 	Compute $B = \begin{bmatrix}\widetilde{K}_1 & \cdots & \widetilde{K}_{64}\end{bmatrix} = \begin{bmatrix}a_1 & b_1\\ \vdots & \vdots \\ a_M & b_M\end{bmatrix}$ and store it as two lists \texttt{list\_a} and \texttt{list\_b} of integers, where \texttt{list\_a[k]} and \texttt{list\_b[k]} represents the binary value of the row vectors $a_k$ and $b_k$, respectively.
		\State 	Enumerate $\cX_a$ and $\cX_b$, and store them as two lists \texttt{list\_Xa} and \texttt{list\_Xb}.
		\State 	Create a list \texttt{list\_Ka} of all the $K_a$s
		\ForAll{\texttt{xa} $\in$ \texttt{list\_Xa}}
		\For{\texttt{k} $=1,\ldots, M$}
		\State \texttt{Ka[k]}$\gets$\texttt{popcount(a[k]\&{}xa)\&1}
		\EndFor
		\EndFor
		\EndProcedure
		\State \textbf{Shared memory:}
		Integer array \texttt{r} of size $N$, such that \texttt{r[k]} stores the \texttt{k}th coefficient of the product $AK$.
		\Function{Kernel}{\texttt{xa,Ka}}
		\State \texttt{i}$\gets$local thread index
		\State \texttt{b}$\gets$\texttt{list\_b[i]}
		\State \texttt{ka}$\gets$\texttt{list\_Ka[i]}
		\ForAll{\texttt{xb }$\in$ \texttt{list\_Xb}}
		\State \texttt{skip}$\gets$\texttt{False}
		\State \texttt{Ki}$\gets$\texttt{(popcount(b\&{}xb)\^{}ka)\&{}1}
		\State \texttt{syncthreads()}
		\ForAll{\texttt{g} $\in$ \texttt{list\_G}}
		\State compute \texttt{g}$(K)$ using the thread values \texttt{Ki}
		\If{\texttt{g}$(K)\leq 0$}
		\State \texttt{skip}$\gets$\texttt{True}
		\State break
		\EndIf
		\EndFor
		\If{\texttt{syncthreads\_or(skip)}}
		\State continue to the next \texttt{xb}
		\EndIf
		\State Reinitialize each value of \texttt{r} to \texttt{0} using the threads
		\If{\texttt{Ki}=\texttt{1}}
		\For{\texttt{k} $=1,\ldots,n$}
		\State increment \texttt{r[A[k][i]]} using the atomic add operation
		\If{\texttt{r[A[k][i]]} $\geq$ \texttt{3}}
		\State \texttt{skip}$\gets$\texttt{True}
		\State break
		\EndIf
		\EndFor
		\EndIf
		\If{\texttt{syncthreads\_or(skip)}}
		\State continue to the next \texttt{xb}
		\EndIf
		\State Add \texttt{K} to the list of results \texttt{list\_K}					
		\EndFor
		\EndFunction
		\Procedure{Main}{}
		\State Launch the $|\cX_a|$ blocks that correspond to all the pairs \texttt{(xa,Ka)} on the \textsc{Kernel}.
		\EndProcedure	
	\end{algorithmic}

\end{algorithm}

\begin{remark} When we say ``using the threads", we mean we evenly distribute the operations to perform among the threads.
	For example, to reinitialize the array \texttt{r}, we use the fact that we have $q$ threads that can set to zero $q$ coordinates simultaneously until all coordinates reset.
	Thus, it requires $\lceil \frac{N}{q}\rceil$ iterations, where $N$ is the number of ridges.
	We use a similar process for calculating the image by the affine functions $g\in\cG$.
\end{remark}

\begin{remark}
	We use the atomic add operation for incrementing values in \texttt{r} since many threads may write at the same memory location \texttt{r[k]}.
\end{remark}

The global complexity of this algorithm is
$$
    \cO\left(\frac{|\cX_a|}{p}\times|\cX_b|\times \frac{N}{q}\times(\alpha|\cG|+1)\right),
$$
where $\alpha$ is the average complexity of the atomic operation when called multiple times for a given $g\in\cG$.

\section{Preparation for applying the algorithm} \label{section:preparation}

\subsection{Finiteness of the problem and seedness}\label{subsection:finiteness}
Let $K$ and $L$ be simplicial complexes on the vertex sets $V$ and $W$, respectively, with $V\cap W=\emptyset$.
The \emph{join} of $K$ and $L$ is the simplicial complex $K \ast L = \{\sigma \cup \tau \mid \sigma \in K, \tau \in L \}$ on the vertex set $V\cup W$.
The \emph{link} of a face $\sigma$ in $K$ is the simplicial complex $Lk_K(\sigma)=\{ \tau \setminus \sigma \mid \sigma \subseteq \tau \in K \}$.
For the sake of simplicity, we denote the simplicial complex consisting of a single maximal simplex $\sigma$ by just $\sigma$.
The (simplicial) \emph{wedge} of $K$ at a vertex $v$ is $\wed_{v}(K)=(I*\Lk_{K}(v)) \cup (\partial I*K \setminus v)$, where $I$ is a $1$-simplex with two new vertices, and $K \setminus v$ is the simplicial complex on $V \setminus \{v\}$ consisting of the faces of $K$ that do not contain $v$.

A simplicial complex $K$ of dimension $n-1$ is a \emph{PL~sphere} if it has a subdivision that is isomorphic to any of  $\partial \Delta^{n-1}$.
It is a \emph{PL~manifold} if the link of each of its faces is a PL~sphere.
It is known that a PL~sphere is a PL~manifold, see \cite[Lemma~1.17]{hudson1969piecewise}.
We refer the reader to~\cite{Bagchi_Datta_1998} for more detailed definitions about PL~manifolds.

A PL~sphere $K$ is called a \emph{seed} if it is not a wedge of another PL~sphere $L$.
The following proposition follows immediately from the definition of the wedge.
\begin{proposition}[seedness from minimal non-faces]\label{proposition:seed no couple}
	A PL~sphere $K$ is a seed if and only if it satisfies the seedness condition; there is no face $\{v,w\}$ in $K$ such that for every minimal non-face $\sigma$ of $K$, we have either $\{v,w\}\subseteq \sigma$ or $v,w\notin \sigma$.
\end{proposition}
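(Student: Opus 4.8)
The plan is to unwind the definition of the wedge and characterize its minimal non-faces, then show that a PL~sphere being a wedge is equivalent to the failure of the stated seedness condition. First I would recall that $\wed_v(K)=(I\ast\Lk_K(v))\cup(\partial I\ast (K\setminus v))$, where $I$ is a $1$-simplex on two new vertices, say $v_1,v_2$, and that the original vertex $v$ is replaced by these two vertices while all other vertices $w\neq v$ are retained. The key computation is to determine the minimal non-faces of $\wed_v(K)$: a short case analysis (splitting according to whether a candidate non-face contains $v_1$, $v_2$, both, or neither) shows that $\{v_1,v_2\}$ is \emph{not} a non-face, that each minimal non-face either contains both $v_1$ and $v_2$ (coming from a minimal non-face of $K$ containing $v$, with $v$ replaced by $\{v_1,v_2\}$) or contains neither (being a minimal non-face of $K$ not containing $v$). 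This is precisely the combinatorial shadow of the seedness condition with $\{v,w\}=\{v_1,v_2\}$.

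Next I would prove the two implications. For the ``only if'' direction: if $K=\wed_v(L)$ for some PL~sphere $L$, then taking $\{v,w\}$ to be the pair of new vertices $\{v_1,v_2\}\in K$ produced by the wedge, the description of minimal non-faces above guarantees that every minimal non-face $\sigma$ of $K$ satisfies $\{v_1,v_2\}\subseteq\sigma$ or $v_1,v_2\notin\sigma$, so the seedness condition fails. For the ``if'' direction: suppose there is a face $\{v,w\}\in K$ such that every minimal non-face $\sigma$ of $K$ satisfies $\{v,w\}\subseteq\sigma$ or $v,w\notin\sigma$. I would then reconstruct the candidate $L$ on vertex set $(V\setminus\{v,w\})\cup\{u\}$ (a single fresh vertex $u$ replacing the pair) by declaring the minimal non-faces of $L$ to be: the minimal non-faces of $K$ not containing $v,w$, kept as is; and, for each minimal non-face $\sigma\ni v,w$ of $K$, the set $(\sigma\setminus\{v,w\})\cup\{u\}$. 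One checks that $\{v,w\}$ being a face forces no minimal non-face of $L$ to be a single vertex, so $L$ is a well-defined simplicial complex, that $\Lk_K(v)=\Lk_K(w)$ and $K\setminus v$ can be read off from $L$, and finally that $\wed_u(L)\cong K$ via the obvious relabelling $u\mapsto\{v,w\}$. That $L$ is again a PL~sphere follows because the wedge and its inverse preserve the PL~sphere property (this is part of the setup from~\cite{CP_wedge_2} and the references cited before the proposition).

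The main obstacle I expect is the ``if'' direction, specifically verifying that the reconstructed complex $L$ is genuinely a simplicial complex whose wedge is $K$ — i.e. that the rule for pulling back minimal non-faces is consistent (no merged non-face of $L$ accidentally sits inside another, and the hypothesis $\{v,w\}\in K$ is exactly what prevents a singleton $\{u\}$ from becoming a non-face). A secondary subtlety is bookkeeping around which vertices are ``new'': one must be careful that the two wedge vertices are a \emph{face} of $\wed_v(K)$ (they are, since $\{v_1,v_2\}\in\partial I\ast(K\setminus v)$ is false but $\{v_1,v_2\}$ lies in $I\ast\Lk_K(v)$ precisely when $\emptyset\in\Lk_K(v)$, which always holds), and that the pair played by $\{v,w\}$ in the seedness condition matches this. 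Apart from these consistency checks, everything reduces to the minimal-non-face bookkeeping, and since the proposition is asserted to follow ``immediately from the definition of the wedge,'' I would keep the write-up to the case analysis for the minimal non-faces of $\wed_v(K)$ plus the two short implications.
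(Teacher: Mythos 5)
Your argument is correct and matches the paper's intent: the paper gives no written proof, asserting the proposition "follows immediately from the definition of the wedge," and your case analysis of the minimal non-faces of $\wed_v(K)$ (both contained or both omitted, with $\{v_1,v_2\}$ a face) together with the reconstruction of $L$ by collapsing the pair $\{v,w\}$ is exactly the unwinding that assertion presupposes. The consistency checks you flag (antichain property of the pulled-back non-faces, no singleton non-face since $\{v,w\}\in K$, and $L$ being a PL~sphere as the link of a wedge vertex) all go through, so nothing further is needed.
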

Remark that the seedness condition can be defined for general simplicial complexes.

Since the links of both new vertices in $\wed_{v}(K)$ are isomorphic to $K$, if $\wed_{v}(K)$ is a PL~sphere, so is $K$.
The converse also holds.

\begin{proposition}
	Let $K$ be a PL~sphere and $v$ a vertex of $K$.
	Then $\wed_{v}(K)$ is a PL~sphere.
\end{proposition}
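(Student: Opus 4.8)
The plan is to establish that $\wed_v(K)$ is a PL~sphere by exhibiting an explicit subdivision of $\partial\Delta^{n}$ (where $n = \dim K + 1$, so $\wed_v(K)$ has dimension $n$) that is isomorphic to a subdivision of $\wed_v(K)$, using the known fact that $K$ itself is a PL~sphere. The cleanest route is to realize the wedge as a well-understood topological operation: if $I$ is a $1$-simplex with new vertices $w_1,w_2$, then $\wed_v(K) = (I * \Lk_K(v)) \cup (\partial I * (K\setminus v))$, and I would first observe that this is precisely the result of "pulling apart" the star of $v$ in $K$ and replacing it by a prism. More concretely, I would use that $\wed_v(K)$ is combinatorially the same as the complex obtained by the bistellar-type / subdivision move on $\partial\Delta^n$ that corresponds to taking a wedge on $\partial\Delta^{n-1}$; since $K$ is a PL~sphere, there is a PL~homeomorphism $|K| \to |\partial\Delta^{n-1}|$, and I would promote it to a PL~homeomorphism $|\wed_v(K)| \to |\wed_v(\partial\Delta^{n-1})| = |\partial\Delta^{n}|$.

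The key steps, in order, would be: (1) Show $\wed_v(K)$ is a PL~manifold by checking links: the link of each of the two new vertices $w_1, w_2$ is isomorphic to $K$, hence a PL~sphere by hypothesis; the link of any old vertex $u \neq v$ is $\wed_{v}(\Lk_K(u))$ if $v \in \Lk_K(u)$ and is $\partial I * \Lk_K(u)$ (a suspension) otherwise, and in both cases this is a PL~sphere by induction on dimension together with the fact that a suspension of a PL~sphere is a PL~sphere; and the link of an edge or higher face reduces similarly. (2) Show $|\wed_v(K)|$ is homeomorphic to $S^n$: topologically, $|\wed_v(K)|$ is obtained from $|I| \times |\St_K(v)|$ glued to $|\partial I| \times |K \setminus v|$ along $|\partial I| \times |\St_K(v)| \cap $ boundary, which, since $|\St_K(v)|$ is a ball (the star of a vertex in a PL~sphere is a PL~ball) and $|K \setminus v|$ is its complementary ball, assembles into $|\partial I * |K|| \cong S^0 * S^{n-1} = S^n$ after the identification — here I would invoke that $|\St_K(v)| \cup_{|\Lk_K(v)|} (I\text{-thickening})$ is PL-standard. (3) Conclude via the theorem that a PL~manifold homeomorphic to a sphere that bounds, or more directly, invoke that $\wed_v$ applied to $\partial\Delta^{n-1}$ gives $\partial\Delta^{n}$ (a direct combinatorial check on the minimal non-faces), and transport the PL~homeomorphism $|K|\to|\partial\Delta^{n-1}|$ through the wedge construction, which is functorial enough to yield the required isomorphic subdivisions.

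I expect the main obstacle to be step (2)/(3): carefully justifying that the PL~homeomorphism on the level of $|K|$ can be upgraded to a PL~homeomorphism on the level of $|\wed_v(K)|$ in a way that respects the combinatorial subdivision structure, rather than merely giving an abstract homeomorphism. The subtlety is that the wedge operation is defined combinatorially in terms of $\Lk_K(v)$ and $K \setminus v$, and a PL~homeomorphism $|K| \to |\partial\Delta^{n-1}|$ need not send the vertex $v$ to a vertex nor the star of $v$ to the star of a vertex; so one must instead argue at the level of subdivisions — replacing the given PL~homeomorphism by one adapted to the pair $(|K|, |\St_K(v)|)$, using that $\St_K(v)$ is a PL~ball with PL~sphere boundary $\Lk_K(v)$, and that any two PL~ball pairs of the right dimensions inside a PL~sphere are PL-equivalent. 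Once this adaptation is in place, the wedge of the adapted map provides simultaneous compatible subdivisions of $\wed_v(K)$ and $\wed_v(\partial\Delta^{n-1}) = \partial\Delta^{n}$, giving the conclusion. An alternative to sidestep this obstacle is a direct induction on dimension: the base case of low dimension is classical, and the inductive step uses the link computations from step (1) together with the fact (Newman / PL~Schoenflies in the relevant form) that a PL~manifold obtained by gluing two PL~balls along their boundary spheres is a PL~sphere.
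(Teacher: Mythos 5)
Your strategy is genuinely different from the paper's, and in outline it can be made to work, but as written it has concrete gaps and leans on much heavier machinery than necessary. The paper's proof is a two-line combinatorial trick: the edge subdivision of $\wed_v(K)$ at the new edge $\{w_1,w_2\}$ is isomorphic to the suspension $K\ast\partial\{w_1,w_2\}$, so the statement reduces to ``the suspension of a PL~sphere is a PL~sphere,'' which follows because each cone $K\ast w_i$ has a common subdivision with an $n$-simplex and the union of the two cones along $K$ is, up to subdivision, the boundary of a simplex. No link computations, no Newman/Schoenflies-type theorems, and no induction are needed. You never use this subdivision identification, which is the key idea of the paper's argument.

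Regarding the gaps in your plan: first, the decomposition in step (2) is written with products where joins are required; $|\partial I|\times|K\setminus v|$ is two disjoint $(n-1)$-balls, not an $n$-ball, whereas the correct pieces are $|I\ast\Lk_K(v)|$ and $|\partial I\ast(K\setminus v)|$ glued along $|\partial I\ast\Lk_K(v)|$. Second, even corrected, steps (1)--(2) only give a PL~manifold homeomorphic to $S^n$, and the first clause of your step (3) (``a PL~manifold homeomorphic to a sphere \dots'') is not an available route: that implication requires Poincar\'e-type theorems and is open in dimension~$4$, so the entire weight falls on your final alternative. That alternative does work, but it silently uses two nontrivial inputs you do not justify: that $|K\setminus v|$, the closure of the complement of the open star, is a PL~ball --- this is Newman's theorem on complements of balls in PL~spheres, not a formal consequence of $\St_K(v)$ being a ball --- and the lemma that two PL~balls glued along a PL~homeomorphism of their boundary spheres form a PL~sphere. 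Also, the assertion that ``any two PL~ball pairs of the right dimensions inside a PL~sphere are PL-equivalent'' is false in general (codimension-two knotting); what you need, and what is true, is only the codimension-zero statement, which again rests on Newman's theorem plus the Alexander cone trick. With those inputs made explicit your two-ball decomposition gives a correct proof, but it is substantially longer and deeper than the paper's subdivision argument; the link computations of step (1) are then superfluous.
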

\begin{proof}
	Suppose that $K$ is an $(n-1)$-dimensional PL~sphere.
	Since the suspension $K \ast \partial \{w_1, w_2\}$ of $K$ is isomorphic to an edge subdivision of $\wed_{v}(K)$, both have the same PL~structure.
	Moreover, $K \ast w_1$ is a subdivision of an $n$-simplex since $K$ is a PL~sphere. Hence $ K \ast \partial \{w_1, w_2\} = (K \ast w_1) \cup_K (K \ast w_2)$ is a subdivision of the boundary of an $n$-simplex.
\end{proof}

Let $K$ be a simplicial complex on the vertex set $[m]$.
A \emph{characteristic map} over $K$ is a map $\lambda \colon [m] \longrightarrow \Z^n$ satisfying the non-singularity condition for $K$: for each face $\sigma$ of $K$, $\lambda(\sigma)$ is a unimodular set.
Then, it is known that the existence of characteristic maps over $K$ is equivalent to the maximality of the Buchstaber number of $K$, i.e., $s(K) = m-n$, see \cite{Buchstaber-Panov2015}.
We call $K$ \emph{toric colorable} if it admits a characteristic map.

One can also consider its mod~$2$ analogue.
A mod~$2$ characteristic map over $K$ is a map $\lambda^\R \colon [m] \longrightarrow \Z_2^{n}$ satisfying that $\lambda^\R(\sigma)$ is a linearly independent set for all $\sigma \in K$.
Similarly, we call~$K$ \emph{$\Z_2^n$-colorable} if it admits a mod~$2$ characteristic map.

\begin{proposition}[\cite{ewald1996combinatorial}, \cite{Choi-Park2016}]
	Let $K$ be an $(n-1)$-dimensional PL~sphere and $v$ a vertex of $K$.
	Then $K$ is toric colorable if and only if so is $\wed_{v}(K)$, and $K$ is $\Z_2^n$-colorable if and only if $\wed_{v}(K)$ is $\Z_2^{n+1}$-colorable.
\end{proposition}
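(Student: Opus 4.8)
The plan is to analyze how a characteristic map over $K$ relates to a characteristic map over $\wed_v(K)$, using the explicit description of the wedge given above, namely $\wed_v(K) = (I * \Lk_K(v)) \cup (\partial I * (K\setminus v))$, where $I$ has two new vertices, say $w_1, w_2$. The vertex set of $\wed_v(K)$ is $([m]\setminus\{v\})\cup\{w_1,w_2\}$, which has $m+1$ elements, so $\Pic$ goes up by one and we must produce $\lambda'\colon [m+1]\to\Z^{n+1}$ (resp.\ $\Z_2^{n+1}$) from $\lambda\colon[m]\to\Z^n$ and vice versa. The key combinatorial fact to isolate first is a description of the faces of $\wed_v(K)$ in terms of the faces of $K$: a subset $\tau$ of the new vertex set is a face of $\wed_v(K)$ if and only if $\tau\setminus\{w_1,w_2\}$ together with $v$ (when $\tau$ meets $\{w_1,w_2\}$ in at most... ) is handled by two cases — if $\{w_1,w_2\}\not\subseteq\tau$ then $\tau$ corresponds to a face of $K$ (identifying either $w_i$ with $v$), and if $\{w_1,w_2\}\subseteq\tau$ then $\tau\setminus\{w_1,w_2\}$ must be a face of $\Lk_K(v)$, i.e.\ $(\tau\setminus\{w_1,w_2\})\cup\{v\}$ is a face of $K$.

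First I would prove the forward direction: given a characteristic map $\lambda$ over $K$, define $\lambda'$ on $[m+1]$ by setting $\lambda'(i) = (\lambda(i), 0)$ for $i\in[m]\setminus\{v\}$, $\lambda'(w_1) = (\lambda(v), 0)$, and $\lambda'(w_2) = (\mathbf{0}, 1) = e_{n+1}$ (a common choice; one checks this works). Then I would verify the non-singularity condition on each face of $\wed_v(K)$ using the two-case description above: faces avoiding $w_2$ reduce to unimodularity of the corresponding face of $K$ under $\lambda$ (with a harmless extra zero coordinate), and faces containing $w_2$ reduce to unimodularity of $\{\lambda(v)\}\cup\lambda(\sigma)$ for $\sigma\in\Lk_K(v)$ — which is exactly unimodularity of the face $\{v\}\cup\sigma$ of $K$ — with the determinant picking up the last coordinate $1$ from $\lambda'(w_2)$. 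Expanding the relevant $(k)\times(k)$ or $(k+1)\times(k+1)$ determinant along the last column makes the unimodularity transparent.

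For the converse, given a characteristic map $\mu$ over $\wed_v(K)$, I would restrict and project to recover one over $K$. The natural move is: after a change of basis of $\Z^{n+1}$ (which preserves unimodularity), normalize so that $\mu(w_2) = e_{n+1}$; then for every face $\sigma$ of $K$ the corresponding face of $\wed_v(K)$ together with the appropriate $w_i$ is a face, and unimodularity there forces the first $n$ coordinates of $\mu$ restricted to $[m]\setminus\{v\}$ (setting $\lambda(v) := $ first-$n$-coordinates of $\mu(w_1)$) to be a valid characteristic map over $K$. One has to check the normalization is possible, which follows since $w_2$ is a vertex of $\wed_v(K)$ so $\mu(w_2)$ is part of a $\Z$-basis; then changing basis so that it becomes $e_{n+1}$ makes every face containing $w_2$ give a unimodularity statement about the "top-left" $n\times n$ block. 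The mod~$2$ statement is proved identically, replacing $\Z^n,\Z^{n+1}$ by $\Z_2^n,\Z_2^{n+1}$, unimodular sets by linearly independent sets, and determinants by their mod~$2$ reductions; no sign or orientation issue arises so the argument is if anything cleaner.

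The main obstacle I anticipate is purely bookkeeping: getting the face correspondence for $\wed_v(K)$ exactly right — in particular the role of faces that contain exactly one of $w_1, w_2$ versus both — and making sure the recovered $\lambda$ is well-defined on all of $[m]$ including at $v$ itself, i.e.\ that the value we assign to $v$ is consistent across all faces of $K$ containing $v$. This is where the normalization $\mu(w_2) = e_{n+1}$ does the real work: it decouples the "$w_2$-direction" from the rest, so that faces of $\wed_v(K)$ not involving $w_2$ already live in the hyperplane $\Z^n\times\{0\}$ and directly encode unimodularity in $K$, while faces involving $w_2$ reduce, after one cofactor expansion, to the same statement. Once the face dictionary is pinned down, all the linear-algebra checks are short determinant expansions and I would not expect any genuine difficulty beyond that.
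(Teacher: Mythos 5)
Your converse direction is essentially sound: normalizing $\mu(w_2)=e_{n+1}$ and projecting is the standard ``restrict to the link of $w_2$'' argument, and the faces $\{w_2\}\cup\tau$ (for $\tau\in K$ with $v\notin\tau$) and $\{w_1,w_2\}\cup\sigma$ (for $\sigma\in\Lk_K(v)$) give exactly the unimodularity statements needed for the recovered $\lambda$. The forward direction, however, is broken, and the error starts with your face dictionary. By the definition $\wed_v(K)=(I\ast\Lk_K(v))\cup(\partial I\ast (K\setminus v))$, a set $\{w_i\}\cup\sigma$ containing exactly one of the new vertices is a face precisely when $\sigma$ is a face of $K$ not containing $v$; it is \emph{not} required that $\sigma\cup\{v\}\in K$, which is what ``identifying $w_i$ with $v$'' asserts. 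In particular, for every facet $F$ of $K$ with $v\notin F$ (such facets exist, since a PL~sphere is never the cone $v\ast\Lk_K(v)$), both $\{w_1\}\cup F$ and $\{w_2\}\cup F$ are facets of $\wed_v(K)$, and your two-case verification never sees them.

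On such a facet your proposed map fails outright: the $n+1$ vectors $\lambda'(w_1)=(\lambda(v),0)$ and $\lambda'(i)=(\lambda(i),0)$, $i\in F$, all lie in $\Z^n\times\{0\}\subset\Z^{n+1}$, so they are linearly dependent and cannot be unimodular (the same failure occurs mod~$2$). The correct construction is the canonical extension $\lambda\wedge_v\lambda$ recalled in Section~5 of the paper: the new coordinate of an old vertex $u$ must be $f(\lambda(u))$ for a linear functional $f$ with $f(\lambda(v))=\pm 1$ (after normalizing $\lambda(v)=e_1$, this is just duplicating the first row), with $\lambda'(w_1)=(\lambda(v),0)$ and $\lambda'(w_2)=e_{n+1}$. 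Indeed, writing $\lambda(v)=\sum_{i\in F}c_i\lambda(i)$ and clearing the $w_1$ column shows the determinant on $\{w_1\}\cup F$ equals $\pm f(\lambda(v))\det\lambda(F)$, which is $\pm1$ for such an $f$ but $0$ for your choice $f\equiv 0$. (For calibration: the paper offers no proof of this proposition, citing Ewald and Choi--Park; the needed matrices are exactly those displayed later for $\lambda\wedge_v\mu$, so your forward step must be repaired along those lines, after which the rest of your outline goes through.)
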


Notice that the composition of a characteristic map over $K$ and mod~$2$ reduction $\Z^n \to \Z_2^n$ yields a mod~$2$ characteristic map over $K$.
As a consequence, we firstly focus on $\Z_2^n$-colorable seeds.

We often see a mod~$2$ characteristic map $\lambda^\R$ as a \emph{characteristic matrix} $\begin{bmatrix} \lambda^\R(1) & \lambda^\R(2) & \cdots & \lambda^\R(m) \end{bmatrix}$.
Up to isomorphism, we may assume that the facet $\{1, 2, \ldots, n\}$ is in $K$.
With this assumption, to check $\Z_2^n$-colorability, it is enough to consider mod~$2$ characteristic maps of the form $\lambda^\R = \begin{bmatrix} I_n & M \end{bmatrix}$ since the non-singularity for $K$ is preserved by the left multiplication with an element of $\GL(n, \Z_2)$.

Let us define \emph{dual characteristic maps (DCM)} over $K$.
For $\lambda^\R = \begin{bmatrix} I_n & M \end{bmatrix}$, the DCM associated with $\lambda^\R$ is a map $\overline{\lambda^\R} \colon [m] \longrightarrow \Z_2^{m-n}$ such that $\overline{\lambda^\R} = \begin{bmatrix} \overline{\lambda^\R}(1) & \overline{\lambda^\R}(2) & \cdots & \overline{\lambda^\R}(m) \end{bmatrix}^t = \begin{bmatrix} M \\ I_{m-n} \end{bmatrix}$.
We shorten the term \emph{injective DCM} to \emph{IDCM}.

\begin{theorem} [\cite{CP_wedge_2}]\label{theorem:IDCM}
	Let $K$ be an $(n-1)$-dimensional PL~sphere with $m$ vertices and $v$, $w$ distinct vertices of $K$.
	Then the following statements are true.
	\begin{enumerate}
		\item If every facet of $K$ contains $v$ or $w$, then $K$ is a wedge or a suspension with respect to $v$ and $w$.
		\item If $K$ is a seed that is not a suspension, then every DCM over $K$ must be an IDCM.
	\end{enumerate}
	Statements~(2) and~(3) both imply:
	\begin{enumerate}[resume]
		\item If $K$ is a seed and $m-n \geq 3$, then $m \leq 2^{m-n}-1$.
	\end{enumerate}
\end{theorem}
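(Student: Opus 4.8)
The plan is to derive the bound from Theorem~\ref{theorem:IDCM}(2) --- a non-suspension seed has all its DCMs injective --- together with the elementary observation that an injective DCM in fact avoids the zero vector; suspensions are then handled by peeling them off one at a time. I work under the hypothesis that $K$ is $\Z_2^n$-colorable, which is the relevant case (and the only one in which a DCM, hence the content of Theorem~\ref{theorem:IDCM}(2), exists). Set $p=m-n\ge 3$.

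First I would record two facts. (i) A PL~sphere is not a cone, so no vertex lies in all of its facets; equivalently every vertex of $K$ lies outside some facet. (ii) If $\overline{\lambda}$ is the DCM of a mod~$2$ characteristic map $\begin{bmatrix}I_n & M\end{bmatrix}$ over $K$ and $\sigma$ is a facet of $K$, then non-singularity of the characteristic map on $\sigma$ and Gale (matroid) duality force $\{\,\overline{\lambda}(j):j\notin\sigma\,\}$ to be a basis of $\Z_2^{p}$; combined with (i) this gives $\overline{\lambda}(v)\ne 0$ for every vertex $v$, so $\overline{\lambda}$ maps $[m]$ into $\Z_2^{p}\setminus\{0\}$.

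If $K$ is not a suspension the claim is immediate: relabelling so that $\{1,\dots,n\}\in K$ and left-multiplying by an element of $\GL(n,\Z_2)$, we may take a mod~$2$ characteristic map of the form $\begin{bmatrix}I_n & M\end{bmatrix}$; by Theorem~\ref{theorem:IDCM}(2) its DCM $\overline{\lambda}$ is an IDCM, hence injective, and by (ii) it maps $[m]$ into $\Z_2^{p}\setminus\{0\}$, so $m\le 2^{p}-1$. For general $K$ I would strip off the suspension factors: write $K\cong L\ast(S^0)^{\ast t}$ with $t$ maximal, so $L$ has no join factor $S^0$ and in particular is not a suspension. If $L$ is void then $K\cong(S^0)^{\ast t}$ with $t=p$, so $m=2p\le 2^{p}-1$ for all $p\ge 3$. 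Otherwise $L$ is a PL~sphere on $m-2t$ vertices with Picard number $p-t\ge 1$; it is $\Z_2^{n-t}$-colorable (project out the suspension directions), and because $\MNF(M\ast\partial\{v,w\})=\MNF(M)\cup\{\{v,w\}\}$ one sees from Proposition~\ref{proposition:seed no couple} that $M\ast\partial\{v,w\}$ is a seed if and only if $M$ is, so $L$ too is a seed. Now I claim $p-t\ge 3$: the only PL-sphere seeds of Picard number $\le 2$ are $S^0$ and $S^0\ast S^0$ --- among Picard-$1$ PL~spheres $\partial\Delta^{k}$ only $\partial\Delta^{1}=S^0$ is a seed (for $k\ge 2$ the unique minimal non-face contains every edge, violating Proposition~\ref{proposition:seed no couple}), and among Picard-$2$ PL~spheres, which by classification are the joins $\partial\Delta^{a-1}\ast\partial\Delta^{b-1}$ with $a,b\ge 2$, only $S^0\ast S^0$ is a seed (if $\max(a,b)\ge 3$, two vertices inside the larger boundary-of-simplex factor form an edge contained in one minimal non-face and disjoint from the other) --- and both of these are suspensions, so the non-suspension seed $L$ has Picard number $\ge 3$. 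Granting $p-t\ge 3$, the non-suspension case applied to $L$ gives $m-2t\le 2^{p-t}-1$, hence
\[
m\le 2^{p-t}-1+2t\le 2^{p-t}-1+2^{p-t}(2^{t}-1)=2^{p}-1,
\]
the middle inequality using $2^{p-t}\ge 2$ and $2^{t}-1\ge t$, valid for all $t\ge 0$.

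The main obstacle is the claim that stripping all suspension factors leaves a seed of Picard number at least $3$ --- equivalently, that there are no non-suspension seeds of Picard number $\le 2$. This is exactly where the hypothesis $p\ge 3$ is consumed, and the one point forcing an appeal to the (elementary but external) classification of PL~spheres of small Picard number; everything else is Theorem~\ref{theorem:IDCM}(2) together with the remark (ii) that an injective DCM misses the zero vector.
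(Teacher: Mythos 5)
The paper does not actually prove Theorem~\ref{theorem:IDCM}: it is quoted from \cite{CP_wedge_2}, and the only argument the paper itself supplies is the assertion that statement~(3) follows from the preceding statements. Your proposal targets exactly that implication, taking (1)--(2) as the cited black box, and your derivation of (3) is correct: injectivity of every DCM over a non-suspension seed (statement~(2)), combined with the observation that linear Gale duality plus the fact that a PL~sphere is never a cone forces every DCM value to lie in $\Z_2^{m-n}\setminus\{0\}$, gives $m\le 2^{m-n}-1$ in the non-suspension case; suspensions are then handled by splitting off the $S^0$ join factors, checking that the core is again a colorable seed (via the description of minimal non-faces of a join together with Proposition~\ref{proposition:seed no couple}, and the link argument for colorability of suspension factors -- both facts the paper itself uses in Sections~4 and~5), excluding cores of Picard number~$\le 2$ by the classical classification of such PL~spheres as joins of simplex boundaries, and finishing with the elementary estimate $2^{m-n-t}-1+2t\le 2^{m-n}-1$. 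Two remarks. First, you rightly add the hypothesis that $K$ admits a DCM, i.e.\ is $\Z_2^n$-colorable; the statement as printed omits it, but it is needed (statement~(2) is vacuous otherwise, and without colorability the bound fails -- the finiteness phenomenon is special to colorable seeds), and it is the only setting in which the paper invokes (3), as well as the setting of \cite{CP_wedge_2}. Second, your route uses only statement~(2) plus external classical facts about Picard number~$\le 2$ spheres, not statement~(1) (which in the source is an ingredient for proving (2)); since the paper relies on the same classical classification elsewhere, this is acceptable, but it does mean your argument proves the implication to (3) rather than the substantive cited statements (1)--(2) themselves, which your proposal does not attempt.
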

We conclude from Statement~(3) of Theorem~\ref{theorem:IDCM} that there are only finitely many $\Z_2^n$-colorable seeds of fixed Picard number~$p$.
We now focus on the case $p=4$.
By Statement~(3) of Theorem~\ref{theorem:IDCM}, we have $n \leq 11$, implying that it is enough to enumerate colorable seeds of dimension up to $10$ ($n=11$).

\subsection{Checking isomorphism using minimal non-faces} \label{subsection:isom}

One demanding problem when enumerating simplicial complexes is dealing with isomorphism.
If a simplicial complex $K$ has $m$ vertices, then there are $m!$ possible relabeling for $K$.
Given two simplicial complexes $K$ and $L$, with respective vertex sets $V$ and $W$, we want to find if they are isomorphic.
One solution is to use McKay's graph isomorphism algorithm~\cite{McKay1981} on the face posets of $K$ and $L$.
We provide here a different solution for testing isomorphism by using their sets of minimal non-faces (MNF).

For every vertex $v$ of $K$, we define its \emph{color sequence} $c_K(v)$ by the increasing sizes of the minimal non-faces of $K$ containing $v$.
For example, if $\MNF(K)=\{\{1,2,3\},\{3,4\},\{4,5,6\},\{2,6\},\{1,6\}\}$, then $c_K(1) = (2,3)$, $c_K(5) = (3)$, and $c(6)=(2,2,3)$.
The color sequence of a vertex is preserved under isomorphism.
The procedure for checking the existence of an isomorphism between two simplicial complexes $K$ and $L$ is the following.
\begin{enumerate}
	\item Check whether $K$ and $L$ have the same combinatorial aspects such as the numbers of faces, and the numbers of non-faces.
	\item Check whether  $\{c_{K}(v)\colon v\in[m]\}=\{c_{L}(v)\colon v\in[m]\}$, by counting repetitions, using their MNF sets.
	\item We give partitions $V_1,\ldots,V_k$ and $W_1,\ldots,W_k$ of $V$ and $W$ with respect to color sequences in $K$ and $L$, respectively.
	We compute every relabeling $\phi_i\colon V_i\to W_i$ for every $i=1,\ldots,k$.
	They provide every relabeling $\phi = \phi_1\times\cdots\times \phi_k\colon V\to W$ that preserves the color sequences.
	If one $\phi$ sends one-to-one the minimal non-faces of $K$ to the ones of $L$ then $K$ is isomorphic to $L$.
\end{enumerate}
The number of relabeling that we compute is $(|V_1|!)\times\cdots\times(|V_k|!)$ instead of $|V|!$.
This provides a fine improvement when there are many different color sequences and only a few vertices share the same color sequence.

\subsection{Collecting PL spheres among weak pseudo-manifolds} \label{subsection:PL_sphere}
We need a criterion for a weak pseudo-manifold to be a PL~sphere.
We obtain this criterion in two steps.
First, when the Picard number is small enough, there is a nice characterization of PL~manifolds that are PL~spheres.

\begin{theorem}[\cite{Bagchi2005}]
	\label{theorem:combisp}
	Let $K$ be a PL~manifold such that $\Pic(K) \leq 7$.
	If $K$ is a $\Z_2$-homology sphere, then it is a PL~sphere.
\end{theorem}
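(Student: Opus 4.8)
The theorem to prove is:

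\begin{theorem}[\cite{Bagchi2005}]
Let $K$ be a PL~manifold such that $\Pic(K) \leq 7$. If $K$ is a $\Z_2$-homology sphere, then it is a PL~sphere.
\end{theorem}

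The plan is to convert the Picard bound into a bound on the number of vertices and then combine the Brehm--K\"uhnel theorem with the known classifications of combinatorial manifolds having few vertices. A $\Z_2$-homology sphere is connected, so I may assume $K$ is connected; writing $d=n-1$ for its dimension and $m$ for its number of vertices, the hypothesis $\Pic(K)=m-(d+1)\le 7$ reads $m\le d+8$. Since $K$ is a PL~manifold it is a combinatorial manifold, so the Brehm--K\"uhnel theorem applies: a combinatorial $d$-manifold with at most $3\lceil d/2\rceil+2$ vertices is a PL~sphere, and one with exactly $3\lceil d/2\rceil+3$ vertices is either a PL~sphere or a ``manifold like a projective plane'', the latter occurring only for $d\in\{2,4,8,16\}$ and having the same $\Z_2$-cohomology ring as the real, complex, quaternionic, or octonionic projective plane. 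Each of those four spaces has non-trivial reduced $\Z_2$-homology in its middle dimension, hence is not a $\Z_2$-homology sphere; so the exceptional case is ruled out by our hypothesis, and $K$ is a PL~sphere whenever $m\le 3\lceil d/2\rceil+3$.

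It remains to treat the window $3\lceil d/2\rceil+4\le m\le d+8$. An elementary estimate shows this range is non-empty only for $d\in\{2,3,4,5,6,8\}$, and for $d\le 2$ the classification of compact surfaces (and, trivially, of closed $1$- and $0$-manifolds) shows directly that the only $\Z_2$-homology $d$-sphere is $S^d$. For $d=3$ we have $m\le 11$; here one appeals to the classification of combinatorial $3$-manifolds with at most $11$ vertices (Walkup and subsequent work) and observes that, apart from $S^3$, every such manifold --- the two $S^2$-bundles over $S^1$, $\R P^3$, and the remaining $11$-vertex examples --- has non-trivial first $\Z_2$-homology, hence is not a $\Z_2$-homology sphere. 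For $d\in\{4,5,6,8\}$ only one to three values of $m$ above the Brehm--K\"uhnel threshold survive (for $d=4$: $m\in\{10,11,12\}$, the $9$-vertex $\C P^2$ being excluded as above; for $d=5$: $m=13$; for $d=6$: $m\in\{13,14\}$; for $d=8$: $m=16$, the $15$-vertex ``quaternionic projective plane'' being excluded), and for each of these one invokes the corresponding enumeration of combinatorial $d$-manifolds with that many vertices and checks that every $\Z_2$-homology $d$-sphere on the list is in fact a PL~sphere.

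I expect the main obstacle to be precisely this last step: in dimensions $4,5,6,8$ the Brehm--K\"uhnel inequality just fails to reach $d+8$ vertices, and one needs either the explicit enumerations of combinatorial manifolds with $d+5,\dots,d+8$ vertices in these dimensions, or a sharpening of Brehm--K\"uhnel that incorporates the reduced $\Z_2$-Betti numbers $\tilde\beta_j(K)$ --- all of which vanish under our hypothesis --- so as to force enough neighborliness to conclude $K\cong S^d$. A helpful auxiliary bound that localises this search is $g_2(K)\le\binom{\Pic(K)}{2}$, which follows at once from $f_1(K)\le\binom{m}{2}$ together with $g_2(K)=f_1(K)-nm+\binom{n+1}{2}$; for $\Pic(K)\le 7$ this gives $g_2(K)\le 21$, so only the structure theory of manifolds with bounded $g_2$ is needed in the residual cases.
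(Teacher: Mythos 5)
First, note that the paper does not prove this statement at all: it is quoted verbatim from Bagchi--Datta \cite{Bagchi2005}, and the authors use it as a black box (only its consequence, Lemma~\ref{lemma:PL_sphereness}, is proved in the paper). So the comparison here is between your sketch and the cited external proof, not an internal argument.

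Your reduction is sound as far as it goes: translating $\Pic(K)\le 7$ into $m\le d+8$, invoking Brehm--K\"uhnel for $m\le 3\lceil d/2\rceil+3$, discarding the ``projective-plane-like'' exceptional manifolds because their middle $\Z_2$-Betti number is nonzero, and isolating the residual pairs $(d,m)$ with $3\lceil d/2\rceil+4\le m\le d+8$, i.e.\ $d\in\{2,3,4,5,6,8\}$, is all correct. The genuine gap is exactly where you suspect it: for $d=4$ ($m\le 12$), $d=5$ ($m=13$), $d=6$ ($m\le 14$) and $d=8$ ($m=16$) there is no enumeration of combinatorial $d$-manifolds with that many vertices to invoke --- complete classifications stop far short of, say, all $16$-vertex combinatorial $8$-manifolds or all $13$-vertex $5$-manifolds --- so the step ``one invokes the corresponding enumeration and checks'' cannot be carried out; even the $d=3$, $m=11$ case requires the Lutz--Sulanke-type enumeration rather than Walkup's theorem alone. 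Your fallback, $g_2(K)\le\binom{\Pic(K)}{2}\le 21$ together with ``structure theory of manifolds with bounded $g_2$,'' is also not available: the existing structure results (stackedness for $g_2=0$, and classifications for $g_2\le 2$ or thereabouts) come nowhere near $g_2\le 21$. These residual cases are precisely the content of the theorem beyond Brehm--K\"uhnel, and Bagchi--Datta dispose of them not by enumeration but by a refined lower-bound-type analysis that feeds the vanishing of the reduced $\Z_2$-homology (inductively, through links and neighborliness considerations) into the vertex-count estimate. As it stands, your proposal is a correct reduction plus an accurate diagnosis of the hard part, but not a proof.
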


By using the above theorem and by the definition of PL~manifolds, we obtain the following lemma.

\begin{lemma}[PL~sphereness]
	\label{lemma:PL_sphereness}
	A weak pseudo-manifold $K$ of Picard number~$\leq 7$  is a PL~sphere if and only if the link of any face (including the empty face) of $K$ is a $\Z_2$-homology sphere.
\end{lemma}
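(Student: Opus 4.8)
The plan is to prove Lemma~\ref{lemma:PL_sphereness} by combining Theorem~\ref{theorem:combisp} with the definition of a PL~manifold and a straightforward induction on dimension. The forward direction is essentially tautological: if $K$ is a PL~sphere, then $K$ is a PL~manifold (by \cite[Lemma~1.17]{hudson1969piecewise}), so the link of every nonempty face is a PL~sphere, and the link of the empty face is $K$ itself, which is a PL~sphere; since every PL~sphere is in particular a $\Z_2$-homology sphere (having the homology of an actual sphere), all these links are $\Z_2$-homology spheres. The only mild point to record is that links of faces of a weak pseudo-manifold of Picard number $p$ again have Picard number at most $p$ — indeed $\Lk_K(\sigma)$ has $m - |\sigma|$ vertices at most and dimension $n-1-|\sigma|$, so its Picard number is at most $m-n = \Pic(K)$ — which keeps us within the hypothesis range $\le 7$ of Theorem~\ref{theorem:combisp} throughout the argument.

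For the converse, suppose every link of $K$ (including $\Lk_K(\emptyset)=K$) is a $\Z_2$-homology sphere, and argue by induction on $n-1 = \dim K$. The base case of small dimension is immediate (a weak pseudo-manifold of dimension~$0$ or~$1$ that is a $\Z_2$-homology sphere is a PL~sphere). For the inductive step, first observe that for any nonempty face $\sigma$, the link $L = \Lk_K(\sigma)$ is itself a weak pseudo-manifold of strictly smaller dimension and of Picard number $\le 7$, and the links of faces of $L$ are links of faces of $K$ (namely $\Lk_L(\tau) = \Lk_K(\sigma\cup\tau)$ for $\tau\in L$), hence are $\Z_2$-homology spheres by hypothesis; so the inductive hypothesis applies and $L$ is a PL~sphere. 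This shows every link of a nonempty face of $K$ is a PL~sphere, i.e.\ $K$ is a PL~manifold. Now apply Theorem~\ref{theorem:combisp} to $K$ directly: $K$ is a PL~manifold with $\Pic(K)\le 7$ and, by hypothesis (the empty-face link), a $\Z_2$-homology sphere, hence a PL~sphere.

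The step I expect to require the most care is not any single deduction but making sure the bookkeeping on links is airtight: that $\Lk_K(\sigma)$ is genuinely a weak pseudo-manifold (purity and the two-facets-per-ridge condition must be checked to descend to links — this is standard but worth a sentence), that the ``homology sphere'' hypothesis is correctly inherited via the identity $\Lk_{\Lk_K(\sigma)}(\tau)=\Lk_K(\sigma\cup\tau)$, and that the Picard-number bound is preserved so Theorem~\ref{theorem:combisp} stays applicable at every stage. Everything else is a direct appeal to Theorem~\ref{theorem:combisp} and the definitions, so the proof will be short once these routine inheritance facts are stated.
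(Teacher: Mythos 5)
Your proposal is correct and follows essentially the same route as the paper: the nontrivial direction is an induction that applies Theorem~\ref{theorem:combisp} to links, using the identity $\Lk_{\Lk_K(\sigma)}(v)=\Lk_K(\sigma\cup\{v\})$ to establish PL~manifoldness and the bound $\Pic(\Lk_K(\sigma))\le\Pic(K)\le 7$ to stay within the theorem's hypotheses. The only organizational difference is that the paper runs the induction inside the fixed $K$, on the dimension of links of faces (with base case that links of ridges are $S^0$ directly from the weak pseudo-manifold condition), whereas you induct on $\dim K$ and apply the lemma itself to each link, which is why you need the routine inheritance fact that links of a weak pseudo-manifold are again weak pseudo-manifolds --- a fact you correctly flag and which does hold.
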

\begin{proof}
	The ``only if'' part is immediate, so it is enough to show the ``if'' part.
	Suppose that the link of any face of $K$ is a $\Z_2$-homology sphere.
	By applying Theorem~\ref{theorem:combisp}, let us prove that the link of each face of $K$ is a PL~sphere.
	
	We use induction on the dimension of the link of a face.
	We remark that the link of each $(n-2)$-face of $K$ is the $0$-sphere $S^0$ by the definition of weak pseudo-manifolds.
	In particular, it is a PL~sphere.
	For $k \leq n-3$, let $\sigma$ be a $k$-face of $K$, and $L = \Lk_K(\sigma)$ its link.
	Note that $\Lk_L(v)$ for a vertex $v$ of $L$ is equal $\Lk_K( \{v\} \cup \sigma )$.
	Therefore, if the link of any $(k+1)$-face of $K$ is a PL~sphere, then $L$ is a PL~manifold.
	Since $L$ is a $\Z_2$-homology sphere by assumption and $\Pic L \leq \Pic K \leq 7$, and $L$ is a PL~sphere by Theorem~\ref{theorem:combisp}.
	By induction, the link of each face is a PL~sphere.
\end{proof}

If we proceed our enumeration inductively, we can use the following method for verifying the PL~sphereness of a given weak pseudo-manifold.
Let $\cS^\circ(n,p)$ denote the set of $\Z_2^n$-colorable seeds of Picard number~$p$ and dimension $n-1$, up to isomorphism.
Let us suppose that we have obtained all $\cS^\circ(k,p)$ for $k<n$ and $p\leq 4$.
Given any $\Z_2^n$-colorable weak pseudo-manifold $K$, we apply the following procedure to check if it is a PL~sphere:
\begin{enumerate}
	\item Check if the $\Z_2$-Betti numbers of $K$ are the ones of a sphere, namely $(1,0,\ldots,0,1)$.
	\item For every vertex $v$ of $K$, let $K_v = \Lk_K(v)$, and let $L_v$ be the seed that $K_v$ is obtained from.
	Since the PL~sphereness property is invariant under the wedge operation, we need to check for every $v$ that $L_v$ is isomorphic to a representative in $\cS^\circ(k,p)$, for some $p\leq 4$ and $k<n$.
	For this purpose, we use the isomorphism-checking method we provided in Section~\ref{subsection:isom}.
\end{enumerate}

We now have the tools for checking:
\begin{itemize}
	\item the seedness condition on a simplicial complex with Proposition~\ref{proposition:seed no couple},
	\item the existence of an isomorphism between two simplicial complexes in Section~\ref{subsection:isom}, and
	\item the PL~sphereness of a weak pseudo-manifold of Picard number~$4$ with Lemma~\ref{lemma:PL_sphereness}.
\end{itemize}

\section{Toric colorable PL spheres of Picard number four} \label{section:Pic4}
In this section, we focus on enumerating all $(n-1)$-dimensional toric colorable seeds of Picard number~$4$.

\subsection{A first intuitive procedure}

%
%
%

One could intuitively try to find all PL~spheres,
and compute their (real) Buchstaber numbers.
However, it is hopeless when we consider high dimensions.
We could obtain results up to $n=6$ by applying either Algorithm~\ref{algorithm:main} or other known methods such as lexicographic enumeration \cite{Sulanke-Lutz2009}, but it seems to take too long to finish for bigger $n$.
\begin{remark}\label{theorem:buchstaber_number}
	Up to isomorphism, one can compute the numbers of $(n-1)$-dimensional PL~spheres and seeds of Picard number~$4$ up to $n=6$, and their real Buchstaber numbers $s^\R$ as follows.
\begin{center}
		\begin{tabular}{lrrrrrr}
		\toprule
		$n$   &        & 2 & 3  & 4  & 5  & 6  \tabularnewline \midrule
		\multicolumn{2}{l}{PL~spheres}   & 1  & 5  & 39 & 337 & 6257 \tabularnewline  \rule{0pt}{3ex}
		&$s^\R=4$           & 1  & 5  & 37 & 281 & 2353 \tabularnewline
		&$s^\R=3$           & 0  & 0  & 2  & 56  & 3904 \tabularnewline \midrule
		\multicolumn{2}{l}{seeds} & 1  & 4  & 23 & 194 & 4237 \tabularnewline \rule{0pt}{3ex}
		&$s^\R=4$           & 1  & 4  & 21 & 142 & 733  \tabularnewline
		&$s^\R=3$           & 0  & 0  & 2  & 52  & 3504 \tabularnewline
		\bottomrule
	\end{tabular}
\end{center}
\end{remark}

Since we focus on $\Z_2^n$-colorable seeds, we use a different approach in all that follows.

\subsection{Enumeration for $n \leq 10$}\label{method:kernel}
In this subsection, we enumerate all $(n-1)$-dimensional $\Z_2^n$-colorable seeds on $[m]$ of Picard number~$4$ for $n\leq 10$.


Suppose that a $\Z_2^n$-colorable seed supports an IDCM.
We first investigate the combinatorial structure of the IDCM itself.

A \emph{matroid} $M$ is a simplicial complex with the \emph{augmentation property}; for any $\tau, \sigma \in M$ with $|\tau| < |\sigma|$, there exists $x \in \sigma \setminus \tau$ such that $\tau \cup \{x\} \in M$.
The \emph{dual matroid} $\overline{M}$ of $M$ is defined on the same vertex set as $M$, and its facets are the complements of each facets of $M$, which are called the \emph{cofacets} of $M$.
For a full row-rank $n \times m$ matrix $\lambda^\R$ over $\Z_2$, the simplicial complex $M_{\lambda^\R}$, whose facets are the sets of column indexes of $n$ linearly independent columns of $\lambda^\R$, forms a matroid.
This matroid is called the \emph{binary matroid} associated with $\lambda^\R$.
Therefore, $K$ supports a mod~$2$ characteristic map $\lambda^\R$ if and only if $K$ is a subcomplex of $M_{\lambda^\R}$.
By linear Gale duality \cite{ewald1996combinatorial}, the dual matroid $\overline{M_{\lambda^\R}}$ is equal to $M_{\overline{\lambda^\R}^t}$.
We can easily verify the following proposition using the definitions of $M_{\lambda^\R}$ and $M_{\overline{\lambda^\R}^t}$.

\begin{proposition}\label{prop:linear matroid}
	Let $K$ be an $(n-1)$-dimensional simplicial complex on $[m]$ and $\overline{\lambda^\R}$ an $m \times (m-n)$ matrix over $\Z_2$ of rank $m-n$.
	Then $K$ supports $\overline{\lambda^\R}$ as a DCM if and only if it is a subcomplex of $\overline{M_{\overline{\lambda^\R}^t}} =M_{\lambda^\R}$.
\end{proposition}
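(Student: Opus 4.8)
The plan is to unwind the definitions of \emph{dual characteristic map} and \emph{binary matroid} and observe that the statement is pure bookkeeping about complements. First I would record the rank observation: since $\overline{\lambda^\R}$ is an $m\times(m-n)$ matrix of rank $m-n$, a subset $\tau\subseteq[m]$ with $|\tau|=m-n$ has $\{\overline{\lambda^\R}(i):i\in\tau\}$ linearly independent over $\Z_2$ if and only if this set is a basis of $\Z_2^{m-n}$. By the definition of the binary matroid, $M_{\overline{\lambda^\R}^t}$ is exactly the simplicial complex on $[m]$ whose facets are the size-$(m-n)$ subsets $\tau$ for which $\{\overline{\lambda^\R}(i):i\in\tau\}$ is a basis; consequently its cofacets — the complements of its facets — are precisely the size-$n$ subsets $\sigma$ such that $\{\overline{\lambda^\R}(i):i\notin\sigma\}$ is a basis, and by definition these are the facets of the dual matroid $\overline{M_{\overline{\lambda^\R}^t}}$.

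Next I would spell out what ``$K$ supports $\overline{\lambda^\R}$ as a DCM'' means: for every facet $\sigma$ of $K$ the cofacet $[m]\setminus\sigma$ is sent by $\overline{\lambda^\R}$ to a basis of $\Z_2^{m-n}$ (this is precisely the Gale-dual reformulation of the non-singularity of a characteristic map on the faces of $K$). By the previous paragraph this says exactly that each facet of $K$ is a facet of $\overline{M_{\overline{\lambda^\R}^t}}$. Since $\overline{M_{\overline{\lambda^\R}^t}}$ is a matroid, it is closed under taking subsets and all of its facets have size $m-(m-n)=n$; therefore ``every facet of $K$ is a facet of $\overline{M_{\overline{\lambda^\R}^t}}$'' is equivalent to ``$K$ is a subcomplex of $\overline{M_{\overline{\lambda^\R}^t}}$'', because any lower-dimensional face of $K$ lies inside a facet of $K$, hence inside a facet of the matroid, hence is itself a face of the matroid. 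This establishes the claimed equivalence.

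Finally, for the identification $\overline{M_{\overline{\lambda^\R}^t}}=M_{\lambda^\R}$ I would simply invoke linear Gale duality exactly as recalled just before the statement, which gives $\overline{M_{\lambda^\R}}=M_{\overline{\lambda^\R}^t}$; taking duals of both sides and using that the double dual of a matroid is the matroid itself yields $\overline{M_{\overline{\lambda^\R}^t}}=\overline{\,\overline{M_{\lambda^\R}}\,}=M_{\lambda^\R}$.

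The only point requiring genuine care — and the closest thing to an obstacle — is pinning down the combinatorial dictionary precisely: matching \emph{facet} of $M_{\overline{\lambda^\R}^t}$ with \emph{cofacet} (and hence with facet of the dual matroid), matching ``linearly independent set of size $m-n$'' with ``basis'' via the rank count, and checking that it suffices to test facets of $K$ rather than all faces (which uses that $K$ is pure of dimension $n-1$, as throughout). Once these identifications are made the argument is entirely formal, which is why the proposition can be asserted to be ``easily verified''.
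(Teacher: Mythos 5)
Your argument is correct and is exactly the kind of definitional verification the paper intends: the paper gives no explicit proof, remarking only that the proposition is easily checked from the definitions of $M_{\lambda^\R}$ and $M_{\overline{\lambda^\R}^t}$ together with the linear Gale duality identity $\overline{M_{\lambda^\R}}=M_{\overline{\lambda^\R}^t}$ quoted just before the statement, and your facet/cofacet bookkeeping plus the double-dual step is precisely that verification. The only caveat, which you yourself flag, is that reading the DCM condition at the level of cofacets presupposes $K$ pure of dimension $n-1$, the situation in which the paper applies the proposition.
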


As we recall, reducing the number of facets in the input of Algorithm~\ref{algorithm:main} leads to a smaller dimension of the mod~$2$ kernel of the ridge-facet incidence matrix, resulting in faster execution of Algorithm~\ref{algorithm:main}.
Proposition~\ref{prop:linear matroid} provides us with a smaller set of facets, as desired.

Furthermore, we leverage the upper bound theorem for facets of PL~spheres, see \cite{Stanley1996book}.
According to this theorem, the number of facets of an $(n-1)$-dimensional simplicial sphere of Picard number~$4$ is less than or equal to the number of facets of the cyclic $n$-polytope $C^n(n+4)$ with $n+4$ vertices.
It is known that the number of facets of $C^n(n+4)$ is
$$
    f_{n-1} (C^n(n+4)) = \binom{n+4- \lceil \frac{n}{2} \rceil}{4} + \binom{n+3- \lfloor \frac{n}{2} \rfloor}{4},
$$ see \cite{Buchstaber-Panov2015} for example.

This condition is represented by the affine function $g(K)=f_{n-1}(C^n(n+4))-\lVert K \rVert_1 +1$, where $\lVert K \rVert_1$ is the $1$-norm of the vector $K$, which corresponds to the number of facets of $K$.
Let $\overline{\lambda^\R} \colon [m] \longrightarrow \Z_2^4$ be an injective map, and denote by $\cF(\lambda^\R)=\cF(M_{\lambda^\R})$ the set of facets of the associated binary matroid.
Algorithm~\ref{algorithm:main}, with inputs $\cF(\lambda^\R)$ and the affine function $g$, outputs the set of all weak pseudo-manifolds that support $\overline{\lambda^\R}$ and satisfy the upper bound theorem.

At first glance, it might seem necessary to run the algorithm on each of the $\binom{11}{n}\times n!$ injective maps $\overline{\lambda^\R}$, even if we fix $\begin{bmatrix} \overline{\lambda^\R}(n+1) & \overline{\lambda^\R}(n+2) & \overline{\lambda^\R}(n+3) & \overline{\lambda^\R}(n+4) \end{bmatrix} = I_4$.
However, we can significantly reduce this large number of cases by observing that many injective maps yield the same outputs up to isomorphism.

Let $\Lambda(n, p)$ be the set of all $ (n+p) \times p$ matrices over $\Z_2$ of the form $\begin{bmatrix} M \\ I_p \end{bmatrix}$, with no repeated rows.
We consider the product of two symmetric groups $\mathfrak{S}_n \times \mathfrak{S}_p$
which acts on $\Lambda(n, p)$ as follows: $\left(\begin{bmatrix} M \\ I_p \end{bmatrix}, (s, t) \right) \mapsto \begin{bmatrix} P_s^t M P_t \\ I_p \end{bmatrix}$, where $P_s$ and $P_t$ are column permutation matrices corresponding to permutations $s$ and $t$.
Let us call each element of $\Lambda(n, p) / \mathfrak{S}_n \times \mathfrak{S}_p$ an \emph{IDCM orbit}.

\begin{proposition}\label{prop:IDCM orbit}
	For $(s, t) \in \mathfrak{S}_n \times \mathfrak{S}_p$, there is an isomorphism between the binary matroids associated to $\overline{\lambda^\R} \in \Lambda(n, p)$ and $\overline{\lambda^\R} \circ (s, t) \in \Lambda(n, p)$.
\end{proposition}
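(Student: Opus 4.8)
The plan is to make the $\mathfrak{S}_n\times\mathfrak{S}_p$-action on $\Lambda(n,p)$ completely explicit at the level of the ``small'' matrix and then read off the claimed isomorphism from a single matrix identity, using only the elementary fact that the column matroid of a full row-rank $\Z_2$-matrix is unchanged by left multiplication with an element of $\GL$ and is relabeled by column permutations.

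First I would fix notation. For $\overline{\lambda^\R}=\begin{bmatrix}M\\ I_p\end{bmatrix}\in\Lambda(n,p)$, put $m=n+p$ and let $\lambda^\R=[\,I_n\mid M\,]$ be the associated $n\times m$ matrix in standard form; by definition (see the paragraph preceding Proposition~\ref{prop:linear matroid}) the binary matroid associated to $\overline{\lambda^\R}$ is $M_{\lambda^\R}$, on the ground set $[m]$. Applying $(s,t)$ sends $\overline{\lambda^\R}$ to $\begin{bmatrix}P_s^tMP_t\\ I_p\end{bmatrix}$, whose standard-form matrix is $\mu^\R=[\,I_n\mid P_s^tMP_t\,]$; so the statement is precisely that $M_{\mu^\R}\cong M_{\lambda^\R}$.

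Next comes the key computation. Let $\pi$ be the permutation of $[m]$ that acts as $s$ on $\{1,\dots,n\}$ and as $t$ on $\{n+1,\dots,n+p\}$, and let $P=\operatorname{diag}(P_s,P_t)$ be the corresponding permutation matrix. Then, using $P_s^tP_s=I_n$,
$$
P_s^{t}\,\lambda^\R\,P \;=\; P_s^{t}\,[\,I_n\mid M\,]\begin{bmatrix}P_s&0\\ 0&P_t\end{bmatrix} \;=\; [\,P_s^tP_s\mid P_s^tMP_t\,] \;=\; [\,I_n\mid P_s^tMP_t\,] \;=\; \mu^\R .
$$
Thus $\mu^\R=P_s^{t}\lambda^\R P$. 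The left factor $P_s^{t}\in\GL(n,\Z_2)$ does not affect which $n$-subsets of columns are linearly independent, so $M_{\mu^\R}=M_{\lambda^\R P}$; and right multiplication by the permutation matrix $P$ permutes the $m$ columns of $\lambda^\R$ according to $\pi$, so $M_{\lambda^\R P}$ is the relabeling of $M_{\lambda^\R}$ along $\pi$. Therefore $\pi$ — regarded as a bijection of vertex sets — is the required isomorphism of binary matroids.

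I would close with two short remarks. First, if instead one reads ``the binary matroid associated to $\overline{\lambda^\R}$'' as $M_{\overline{\lambda^\R}^t}$, then by Proposition~\ref{prop:linear matroid} this is the dual of $M_{\lambda^\R}$, and the same $\pi$ is an isomorphism of the duals since cofacets are complements of facets. Second, $\overline{\lambda^\R}\circ(s,t)$ indeed still lies in $\Lambda(n,p)$: the transformation only permutes rows (by $\pi$) and changes coordinates on $\Z_2^p$ (by $P_t$), so it identifies no two rows. I do not expect any genuine obstacle here; the only care needed is with conventions — which side $P_s$ and $P_t$ act on, and whether the relabeling is $\pi$ or $\pi^{-1}$ — and these ambiguities are immaterial since only the existence of an isomorphism is asserted.
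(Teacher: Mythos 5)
Your proof is correct and takes essentially the same route as the paper: both arguments reduce to the facts that an invertible factor on one side of the matrix leaves the associated binary matroid unchanged, while a permutation factor on the other side merely relabels the ground set. The only difference is presentational — you compute on the primal $n\times m$ matrix $\lambda^{\R}=[\,I_n\mid M\,]$ via the single identity $P_s^{t}\lambda^{\R}\operatorname{diag}(P_s,P_t)=[\,I_n\mid P_s^{t}MP_t\,]$, whereas the paper works on the dual $m\times p$ matrix and treats $s$ (relabeling $V_1$) and $t$ (via the factorization $\left[\begin{smallmatrix}P_s^{t}MP_t\\ I_p\end{smallmatrix}\right]=\left[\begin{smallmatrix}P_s^{t}M\\ P_t^{t}\end{smallmatrix}\right]P_t$, i.e.\ relabeling $V_2$ after an invertible right factor) separately, so the two are dual phrasings of one and the same argument.
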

\begin{proof}
	We partition the vertex set $[m]$ into $V_1=[n]$ and $V_2=\{n+1, n+2, \ldots, n+p\}$.
	It is evident that the matrix $\begin{bmatrix} P_s^t M \\ I_p \end{bmatrix}$ preserves the same non-singularity information on cofacets, with the vertices in $V_1$ relabeled according to permutation $s$.
	Additionally, since $P_t$ is invertible, we have $M_{(\overline{\lambda^\R}P_t)^t} = M_{\overline{\lambda^\R}^t}$.
	Then, applying $t$ to $V_2$ in the equation
	\begin{align*}
		\begin{bmatrix} P_s^t M P_t \\ I_p \end{bmatrix} = \begin{bmatrix} P_s^t M \\ P_t^t I_p \end{bmatrix} P_t,
	\end{align*}
 yields the same non-singularity conditions for $\begin{bmatrix} P_s^t M \\ I_p \end{bmatrix}$ and $\begin{bmatrix} P_s^t M P_t \\ I_p \end{bmatrix}$.
\end{proof}

Let $\Lambda^\circ(n,4)\subseteq \Lambda(n,4)$ be a set containing one representative per IDCM orbit.
By Proposition~\ref{prop:IDCM orbit}, it is enough to input Algorithm~\ref{algorithm:main} with $\cF(\lambda^\R)$ for all $\lambda^\R\in\Lambda^\circ(n,4)$.
Table~\ref{table:table_IDCM} displays how efficient our method is.
It provides the dimension of the kernel of  the ridge-facet incidence matrix $A\left(\binom{[m]}{n}\right)$ and the size of the set $\cX\left(\binom{[m]}{n}\right)$ which represents the number of element in its kernel for which we should verify whether they are weak pseudo-manifolds, together with $\max_{\overline{\lambda^\R}}(\dim\ker A(\cF(\lambda^\R)))$ and $\max_{\overline{\lambda^\R}} |\cX(\cF(\lambda^\R))|$.
The number of IDCM orbits of $\Lambda(n,4)$ and the computation time of the call of Algorithm~\ref{algorithm:main} at line~\ref{line:5} of Algorithm~\ref{algorithm:colorable_seeds} are also provided.
This demonstrates that our reductions enable computability of the problem, for example with $n=10$:
\begin{itemize}
	\item the choice of the convenient basis made in Section~\ref{subsection:enum_weak_psdmfd} reduces the number of cases from $2^{286}\simeq 1e86$ to $5e74$ for the set of facets $\binom{[m]}{n}$, and from $2^{56}\simeq 7e16$ to $4e14$ for  $\cF(\lambda^\R)$, and
	\item taking into account that $\Z_2^n$-colorable seeds are subcomplexes of the binary matroid associated to an IDCM divides the number of cases to compute by a factor of $10^{60}$.
\end{itemize}

\begin{table}[ht]
	\begin{tabular}{lcccccccccc}
		\toprule
		$n$                                                                     &  2  &  3   &  4   &  5   &  6   &  7  &  8   &  9   &   10   &     11      \tabularnewline \midrule
		$\dim \ker A\left(\binom{[m]}{n}\right)$                                & 10  &   20   &  35    &   56   &   84   &  120   &    165  &   220   &   286     &     364        \tabularnewline
		$|\cX \left(\binom{[m]}{n}\right)|$                                     &  352   &  2e5    &  1e9    &   3e14   &  7e18    &  8e21   &  3e31    &    4e57  &   5e74     &    2e93         \tabularnewline \midrule
		Number of IDCM orbits                                                   &  7  &  16  &  28  &  35  &  35  & 28  &  16  &  7   &   3    &      1      \tabularnewline
		$\max_{\overline{\lambda^\R}}(\dim\ker A(\cF(\lambda^\R)))$ &  7  &  13  &  21  &  24  &  28  & 34  &  42  &  48  &   56   &     64      \tabularnewline
		$\max_{\overline{\lambda^\R}} |\cX(\cF(\lambda^\R))|$ & 56  & 3e3  & 5e5  & 1e6  & 2e7  & 9e8 & 1e11 & 3e12 & 4e14 &   4e16    \tabularnewline \midrule
		Time spent for one orbit                                                & 1ms & 10ms & 0.1s & 0.6s & 1.3s & 3m  & 15m  &  2h  &  12d   & \textbf{3y} \tabularnewline \bottomrule
	\end{tabular}
	\vspace*{0.2cm}
	\caption{Data table for Picard number~$4$ and $n=2,\ldots,11$. The time spent refers to Algorithm~\ref{algorithm:main} running on an Nvidia Quadro A5000. The time written in bold in the case $n=11$ is an estimation.} \label{table:table_IDCM}
\end{table}
\begin{algorithm}
	The full procedure for obtaining every $(n-1)$-dimensional seed of Picard number~$4$ supporting an IDCM is as follows.\label{algorithm:colorable_seeds}
	\begin{algorithmic}[1]
		\algrenewcommand\algorithmicrequire{\textbf{Input:}}
		\algrenewcommand\algorithmicensure{\textbf{Output:}}
		\algrenewcommand\algorithmicprocedure{\textbf{Procedure}}
		\algrenewcommand\algorithmicfunction{\textbf{Function}}
		\Require{Integer $n\geq 2$.}
		\Ensure{The set $\cK^\circ(n,4)$ of $(n-1)$-dimensional seeds of Picard number~$4$ supporting an IDCM up to isomorphism.}
		\Procedure{GetIDCM-ColorableSeedsPic4}{$n$}
		\State Compute $\Lambda^\circ(n,4)\subseteq \Lambda(n,4)$ a set containing one representative for each IDCM orbit.
		\State $\cK(n,4)\gets\emptyset$
		\ForAll{$\lambda^\R \in \Lambda^\circ(n,4)$}
		\State $\cK(\lambda^\R) \gets $ output of Algorithm~\ref{algorithm:main} with inputs $\cF(\lambda^\R)$ and $\cG=\{g\}$, with $g(K) = f_{n-1}(C(n,n+4)) -\|K\|_1 +1$\label{line:5}
		\State $\cK(n,4)\gets \cK(n,4)\cup \cK(\lambda^\R)$
		\EndFor \label{line:7}
		\ForAll{$K\in\cK(n,4)$}
		\If{$\Pic(K)<4$} discard $K$ \EndIf
		\If{$K$ does not satisfy the seedness condition} discard $K$ \EndIf\label{line:12}
		\EndFor\label{line:13}
		\State Select $\cK^\circ(n,4)\subseteq \cK(n,4)$ with one representative $K$ up to isomorphism
		\ForAll{$K\in\cK^\circ(n,4)$}\label{line:15}
		\If{$K$ is not a PL~sphere} discard $K$ \EndIf
		\EndFor
		\EndProcedure\label{line:19}
	\end{algorithmic}
\end{algorithm}
Running  Algorithm~\ref{algorithm:colorable_seeds} for $n \leq 10$ gives Table~\ref{table:output}.
\begin{table}[ht]
	\begin{tabular}{l*{10}{c}}
		\toprule
		$n$ & $2$ & $3$  & $4$   & $5$   & $6$    & $7$    & $8$    & $9$     & $10$  \tabularnewline \midrule
		$|\cK(n,4)|$ at line~\ref{line:7}& $90$  & $1119$ & $20383$ & $79877$ & $322837$ & $503624$ & $469445$ & $224854$ & $99374$ \tabularnewline
		$|\cK(n,4)|$ at line~\ref{line:13} & $22$  & $578$  & $13679$ & $47012$ & $204714$ & $310217$ & $305280$ & $140933$  & $57956$ \tabularnewline
		$|\cK^\circ(n,4)|$ at line~\ref{line:15} & $2$  & $5$  & $49$ & $256$ & $1791$ & $2194$ & $1401$ & $381$  & $56$ \tabularnewline
		$|\cK^\circ(n,4)|$ at line~\ref{line:19}  & $1$ & $4$  & $20$  & $142$ & $733$  & $1190$ & $776$  & $243$   & $39$  \tabularnewline \bottomrule
	\end{tabular}
	\vspace*{0.2cm}
	\caption{The output of Algorithm~\ref{algorithm:colorable_seeds} for $n\leq 10$.\label{table:output}}
\end{table}

Now, all that remains are the seeds that do not admit any IDCM.
We show there actually remains a single one not outputted by Algorithm~\ref{algorithm:colorable_seeds}.

From Theorem~\ref{theorem:IDCM}, they must be suspensions.
Let $L=\partial{[v, w]} \ast K$ be the suspension of an $(n-2)$-dimensional simplicial complex $K$, and suppose that $L$ is $\Z_2^{n}$-colorable.
We may assume that a characteristic map $\lambda^\R$ over $L$ satisfies $\lambda^\R(v)=\begin{bmatrix} 1 & 0 & \cdots & 0  \end{bmatrix}^t$.
Then for any facet $\{v\} \cup \{v_1, \ldots, v_{n-1} \}$ of $L$, the $(1, v)$ minor of the matrix $\begin{bmatrix} \lambda^\R(v) & \lambda^\R(v_1) & \cdots & \lambda^\R(v_{n-1}) \end{bmatrix}$ is equal to $1$.
This implies that $\Lk_L(1)=K$ is $\Z_2^{n-1}$-colorable.
Hence, the suspension operation preserves $\Z_2^{n}$-colorability while also preserving seedness, and increases the Picard number by one.
Therefore, it is sufficient to consider the suspensions of the seeds of Picard number~$3$.

The three $\Z_2^n$-colorable seeds of Picard number~$3$ are the boundaries of a pentagon, a $3$~dimensional cross polytope, and a cyclic polytope $C^4(7)$ \cite{Erokhovets2011}.

The suspension of a pentagon and a cyclic polytope both support an IDCM and 
have been obtained in Table~\ref{table:output} for $n=3$ and $5$.
Finally, the boundary of a cross polytope does not support any IDCM, but does support a DCM, so we add it to the result $\cK^\circ(4,4)$ in Table~\ref{table:output}.

\begin{theorem}
	Up to isomorphism, the number of $\Z_2^n$-colorable seeds of dimension $n-1$ and Picard number~$4$ for $n\leq 10$ is as follows.
	\begin{center}
		\begin{tabular}{l*{10}{c}}
			\toprule
			$ n$ & $2$ & $3$ & $4$  &  $5$  &  $6$  &  $7$   &  $8$  &  $9$  & $10$  \tabularnewline \midrule
			$\Z_2^n$-colorable seeds                 & $1$ & $4$ & $20+1$ & $142$ & $733$ & $1190$ & $776$ & $243$ & $39$  \tabularnewline \bottomrule
		\end{tabular}
	\end{center}
\end{theorem}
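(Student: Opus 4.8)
The plan is to simply collect the pieces already assembled in this section and verify that they account for all $\Z_2^n$-colorable seeds of Picard number~$4$ for $n \leq 10$. By Theorem~\ref{theorem:IDCM}(3), every such seed has at most $2^4-1$ vertices, so the cases $n \leq 10$ are exhaustive within Picard number~$4$. A $\Z_2^n$-colorable seed either supports an IDCM or it does not, and we treat these two classes separately, then take their union, checking there is no overlap.

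First I would handle the seeds supporting an IDCM. Here I invoke the correctness of Algorithm~\ref{algorithm:colorable_seeds}: by Proposition~\ref{prop:linear matroid}, a seed supporting the DCM $\overline{\lambda^\R}$ is a subcomplex of $M_{\lambda^\R}$, so its facets lie in $\cF(\lambda^\R)$; by the upper bound theorem the affine constraint $g(K) = f_{n-1}(C^n(n+4)) - \|K\|_1 + 1 > 0$ is satisfied; and by Proposition~\ref{prop:IDCM orbit} it suffices to range over IDCM orbit representatives $\lambda^\R \in \Lambda^\circ(n,4)$. The subsequent filtering steps (lines~\ref{line:12}--\ref{line:19}) discard those $K$ with $\Pic(K) < 4$, those failing the seedness condition of Proposition~\ref{proposition:seed no couple}, duplicates up to the isomorphism test of Section~\ref{subsection:isom}, and finally those that are not PL~spheres via Lemma~\ref{lemma:PL_sphereness}. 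The surviving counts are exactly the bottom row of Table~\ref{table:output}, namely $1, 4, 20, 142, 733, 1190, 776, 243, 39$ for $n = 2, \ldots, 10$.

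Next I would account for the seeds admitting no IDCM. By Theorem~\ref{theorem:IDCM}(2) such a seed must be a suspension, and as argued just above the excerpt's statement, the suspension $\partial[v,w] \ast K$ is $\Z_2^n$-colorable, seed, and of Picard number~$4$ precisely when $K$ is a $\Z_2^{n-1}$-colorable seed of Picard number~$3$; evaluating the $(1,v)$ minor shows $\Lk_L(1) = K$ inherits colorability. By the Picard-number-$3$ classification of Erokhovets~\cite{Erokhovets2011}, there are exactly three such $K$: the boundaries of the pentagon, the $3$-dimensional cross-polytope, and the cyclic polytope $C^4(7)$. The first two suspensions support an IDCM and already appear in Table~\ref{table:output} at $n=3$ and $n=5$, so they are not new. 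The suspension of the boundary of the cross-polytope supports a DCM but no IDCM, hence was missed by Algorithm~\ref{algorithm:colorable_seeds}; it is the unique additional seed, contributing the ``$+1$'' at $n = 4$.

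The main obstacle is not mathematical but verificational: the entire case $n \leq 10$ rests on the correctness and exhaustiveness of the GPU computation encapsulated in Algorithm~\ref{algorithm:main}, including the basis-reduction bookkeeping of Section~\ref{subsection:enum_weak_psdmfd}, the IDCM-orbit enumeration, and the isomorphism and PL-sphereness post-processing. What must be argued carefully is that no seed is lost: that every $\Z_2^n$-colorable seed of Picard number~$4$ either supports an IDCM (and is therefore caught by Algorithm~\ref{algorithm:colorable_seeds} for the appropriate orbit, since Proposition~\ref{prop:IDCM orbit} guarantees the chosen representative yields an isomorphic output) or is a suspension of one of the three Picard-$3$ seeds. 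Combining the two mutually (nearly) exclusive tallies — the IDCM-supporting seeds from Table~\ref{table:output} and the single extra suspension at $n=4$ — produces the stated table, completing the proof.
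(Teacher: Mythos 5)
Your proposal is correct and follows essentially the same route as the paper: the IDCM-supporting seeds are exactly the output of Algorithm~\ref{algorithm:colorable_seeds} recorded in Table~\ref{table:output}, and the non-IDCM seeds are handled via Theorem~\ref{theorem:IDCM}(2) by reducing to suspensions of the three Picard-number-$3$ colorable seeds, of which only the suspension of the boundary of the $3$-dimensional cross polytope is new, giving the ``$+1$'' at $n=4$. No substantive difference from the paper's argument.
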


\subsection{Enumeration for $n=11$} \label{subsection:n=11}
As shown in Table~\ref{table:table_IDCM}, the time complexity of the extreme case $n=11$ remains too long.
To address this, we leverage the results obtained from the dimension just below to construct the seeds for this extreme case.

Let $K$ be a $\Z_2^{11}$-colorable seed on $\{1, 2, \ldots, 15 \}$ of dimension $10$ ($n=11$).
We know that the link of the vertex $15$ has Picard number~$\leq 4$ and is a $\Z_2^{10}$-colorable seed, which we have already enumerated.
We construct all $\Z_2^{11}$-colorable seeds of dimension $10$ from the $\Z_2^{10}$-colorable ones of dimension $9$.
Firstly, if $K$ has only vertices whose links have Picard numbers at most 2, then $K$ is the boundary of a product of simplices \cite{Grunbaum2003}, and therefore not a seed.
Suppose that the link of $15$ has Picard number~$3$.
Since there is no $9$-dimensional seed of Picard number~$3$, it implies that the link of vertex $15$ is not a seed.
By the following lemma, we can identify another vertex of $K$ whose link has Picard number~$4$.
\begin{lemma}
	Let $K$ be a seed of Picard number~$4$.
	Assume that $K$ has a vertex $v$ such that $\Pic(\Lk_K(v)) = 3$ and there exist two vertices $v_1$ and $v_2$ of $\Lk_K(v)$ such that every facets of $\Lk_K(v)$ contains either $v_1$ or $v_2$.
	Then, there is a vertex of $K$ whose link has Picard number~$4$.
\end{lemma}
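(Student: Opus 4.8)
The plan is to argue by contradiction from the Gale-dual / DCM side, exploiting the structure Theorem~\ref{theorem:IDCM} forces on a seed that is a suspension. Suppose, for contradiction, that $K$ is a seed of Picard number~$4$ with a vertex $v$ as in the hypothesis, but every vertex of $K$ has a link of Picard number~$\leq 3$. By Statement~(1) of Theorem~\ref{theorem:IDCM}, the hypothesis on $\Lk_K(v)$ — that every facet contains $v_1$ or $v_2$ — means $\Lk_K(v)$ is a wedge or a suspension with respect to $v_1$ and $v_2$. First I would rule out the wedge case: if $\Lk_K(v) = \wed_{u}(L)$ for some PL~sphere $L$, then, using the fact that a wedge in a link corresponds to a wedge in $K$ at the same vertex (the combinatorics of $\Lk$ commuting with $\wed$ away from $v$), one obtains that $K$ itself fails the seedness condition of Proposition~\ref{proposition:seed no couple}, contradicting that $K$ is a seed. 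So $\Lk_K(v)$ must be a \emph{suspension} $\partial\{v_1,v_2\}\ast L$ where $L$ is an $(n-3)$-dimensional simplicial complex with $\Pic(L)=\Pic(\Lk_K(v))-1 = 2$.

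Next I would pass to the DCM picture. Pick a mod~$2$ characteristic map $\overline{\lambda^\R}$ over $K$; by Proposition~\ref{prop:linear matroid} this is a DCM, and $K$ is a subcomplex of $M_{\lambda^\R}$. The suspension vertices $v_1,v_2$ of $\Lk_K(v)$ satisfy: $\{v,v_1\}$ and $\{v,v_2\}$ are faces of $K$ but $\{v_1,v_2\}$ is a minimal non-face (this is exactly the suspension structure). I would translate the "every facet contains $v_1$ or $v_2$ in $\Lk_K(v)$" condition into a statement about the columns $\overline{\lambda^\R}(v_1),\overline{\lambda^\R}(v_2)$ and about the possible $\overline{\lambda^\R}$-values on $V(K)\setminus\{v,v_1,v_2\}$: within $\Lk_K(v)$, which is $\Z_2^{n-1}$-colorable of Picard number~$3$, the restriction of (the quotient of) $\overline{\lambda^\R}$ is a DCM whose associated binary matroid on $\Pic=3$ forces $\overline{\lambda^\R}(v_1)=\overline{\lambda^\R}(v_2)$ modulo the span of $\overline{\lambda^\R}(v)$ — this is the combinatorial incarnation of "$v_1,v_2$ behave as antipodal vertices in the suspension". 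From this collinearity relation among the three columns $\overline{\lambda^\R}(v),\overline{\lambda^\R}(v_1),\overline{\lambda^\R}(v_2)$ in $\Z_2^4$, I would locate a fourth vertex $w$ — a vertex outside $\{v,v_1,v_2\}$ whose column completes a small configuration — and show directly that $\Lk_K(w)$ has Picard number~$4$: concretely, no nontrivial linear relation among the columns $\overline{\lambda^\R}(u)$, $u\in V(K)\setminus\{w\}$, can collapse the rank, because the relation forced by $v,v_1,v_2$ already "uses up" the rank-deficiency budget, so removing any other column keeps rank $4$, i.e.\ $m-1-(n-1)=4$.

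The main obstacle I anticipate is the bookkeeping in the middle step: making precise and verifying the claim that $\overline{\lambda^\R}(v_1)$ and $\overline{\lambda^\R}(v_2)$ are forced to be equal modulo $\langle\overline{\lambda^\R}(v)\rangle$. This requires knowing the structure of $\Z_2^{n-1}$-colorable PL~spheres (indeed, of weak pseudo-manifolds) of Picard number~$3$ that are suspensions — essentially that their DCMs over $\Z_2^3$ have a forced pair of equal (or complementary) columns reflecting the two suspension points — which one gets from the classification of colorable seeds of Picard number~$3$ recalled just before the lemma (pentagon, cross-polytope, $C^4(7)$) together with the fact that wedges preserve the relevant column-repetition data. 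A secondary subtlety is handling the degenerate possibility that the vertex $w$ I produce coincides with $v$, $v_1$, or $v_2$, or that several candidate $w$'s exist; I would resolve this by a short case analysis on the $\{0,1\}$-pattern of the three distinguished columns in $\Z_2^4$, of which there are only a handful up to the $\mathfrak{S}_n\times\mathfrak{S}_p$-action of Proposition~\ref{prop:IDCM orbit}. Once $w$ is fixed, the rank computation showing $\Pic(\Lk_K(w))=4$ is routine linear algebra over $\Z_2$.
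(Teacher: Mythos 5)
Your proposal has genuine gaps at both ends of the argument. First, the dismissal of the wedge case rests on a false principle: if $\Lk_K(v)$ is a wedge with respect to $v_1,v_2$, it does not follow that $K$ is a wedge or violates the seedness condition of Proposition~\ref{proposition:seed no couple} --- minimal non-faces of a link are not minimal non-faces of $K$, and vertex links of seeds are frequently non-seeds. Indeed, this is exactly the situation in which the lemma is used in Section~\ref{subsection:n=11}: there is no $9$-dimensional seed of Picard number~$3$, so the link in question is necessarily a non-seed (a wedge or a suspension), while $K$ is a seed; no contradiction is available, and the wedge case must be handled rather than excluded. Second, and more fundamentally, the concluding step misreads what $\Pic(\Lk_K(w))=4$ means. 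The Picard number of a link equals $|V(\Lk_K(w))|-(n-1)$, i.e.\ it is $4$ precisely when $w$ is joined by an edge to every other vertex of $K$; this is an adjacency count, not a rank statement. Showing that the DCM columns $\overline{\lambda^\R}(u)$, $u\neq w$, still span $\Z_2^4$ says nothing about which vertices lie in $\Lk_K(w)$, so the final ``routine linear algebra'' cannot deliver the conclusion. The intermediate claim that $\overline{\lambda^\R}(v_1)$ and $\overline{\lambda^\R}(v_2)$ agree modulo $\langle\overline{\lambda^\R}(v)\rangle$ is also left unproved, and the whole proposal silently assumes $K$ is $\Z_2^n$-colorable, which is not part of the statement.

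For contrast, the paper's proof is elementary and avoids characteristic maps entirely: since each ridge of the PL~sphere $\Lk_K(v)$ lies in exactly two facets, a facet $\{v_1\}\cup\sigma$ avoiding $v_2$ forces $\{v_2\}\cup\sigma$ to be a facet, so every vertex of $\Lk_K(v)$ is adjacent to both $v_1$ and $v_2$. There is exactly one vertex $w$ of $K$ outside $\Lk_K(v)\cup\{v\}$. If $v_1\in\Lk_K(w)$ (or symmetrically $v_2$), then $\Lk_K(v_1)$ contains all $n+3$ other vertices and hence has Picard number~$4$; if $v_1,v_2\notin\Lk_K(w)$, then $\Lk_K(w)$ is an $(n-2)$-dimensional PL~sphere on only $n$ vertices, hence $\partial\Delta_{n-1}$, and any vertex $w'$ of it is adjacent to every other vertex of $K$, giving $\Pic(\Lk_K(w'))=4$. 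If you want to salvage your matroid-theoretic viewpoint, you would need to translate ``Picard number of a link'' into adjacency data, at which point the counting argument above is already the whole proof.
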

\begin{proof}
	Let $\{v_1\} \cup \sigma$ be a facet without $v_2$.
	There is one more facet containing $\sigma$ since it is a ridge of the PL~sphere $\Lk_K(v)$.
	By the assumption, it must be $\{v_2\} \cup \sigma$.
	That shows every vertex in $\Lk_K(v)$ forms an edge with both $v_1$ and $v_2$.
	
	Let $w$ be the vertex not in $\Lk_K(v)$.
	If $v_1 \in \Lk_K(w)$, then $\Lk_K(v_1)$ has Picard number~$4$.
	If both $v_1, v_2 \not\in \Lk_K(w)$, then $\Lk_K(w)$ is an $(n-2)$-dimensional PL~sphere with $n$ vertices.
    This means that $\Lk_K(w) = \partial\Delta_{n-1}$.
    Then if $w^\prime$ is a vertex of $\Lk_K(w)$ other than $w$, then $\Pic(\Lk_K(w^\prime)) = 4$.
\end{proof}
That implies any $\Z_2^{11}$-colorable seed of Picard number~$4$ has a vertex whose link also has Picard number~$4$, which we relabel as vertex $15$.
Before we apply Algorithm~\ref{algorithm:colorable_seeds} for this case, we need some preparation as follows.
We firstly select an injective map $\bar{\mu}\colon \{1,\ldots, 14\} \to \Z_2^4$ and choose a $9$-dimensional PL~sphere $L$ that supports $\bar{\mu}$.
We see $L$ as the link of the vertex $15$ in some $\Z_2^{11}$-colorable seed $K$ supporting some IDCM $\overline{\lambda^\R}$ with the restriction $\overline{\lambda^\R} \vert_{\{1,\ldots, 14\}} = \bar{\mu}$.
Since $\left| \Z_2^4  \setminus \{0\}\right| = 15$, once $\bar{\mu}$ is chosen, $\overline{\lambda^\R}$ is uniquely determined.
There are $114$ $9$-dimensional $\Z_2^{10}$-colorable PL~spheres that support an IDCM, among which $39$ are seeds and $75$ are non-seeds. They can be obtained from Algorithm~\ref{algorithm:colorable_seeds} by skipping the step that discards the non-seeds, for instance.

Let $\hat L$ be the simplicial complex $\{ \sigma \cup \{15\} \mid \sigma \in L \}$.
All PL~spheres $K$ having its vertex $15$ whose link is $L$ contains $\hat{L}$.
That provides the following conditions on the components of $K\in\Z^M$:
\begin{enumerate}
	\item for all $\hat{L}_j=1$, $K_j=1$, and
	\item for all $\hat{L}_j = 0$ with $\hat{L}_j \ni \{15\}$, $K_j=0$.
\end{enumerate}
We will denote by $I$ and $J$  the set of indexes of the facets satisfying Condition~(1), respectively Condition~(2).
After reordering the rows of $B$, the two conditions appear as follows
\begin{align}\label{equation:condition_X}
	BX = \begin{bmatrix} B_I \\ B_J \\ B_{[M] \setminus (I \cup J)} \end{bmatrix}X = \begin{bmatrix} \mathbf{1} \\ \mathbf{0} \\ \star \end{bmatrix}.
\end{align}
A mod~$2$ Gaussian elimination process on the columns of $B$ gives a column-reduced echelon form $\widetilde{B}$ which yields another set of generators for the mod~$2$ kernel of $A$.
Denote by $s_I$ and $s_J$ the maximal index of non-zero column of $\widetilde{B}_{I}$ and of $\widetilde{B}_J$, respectively.
To respect conditions (1) and (2), we need $$x_t =\begin{cases} 1&t=1,\ldots,s_I\\0&t=s_I+1,\ldots, s_J\\ \star& \text{otherwise}\end{cases},$$
for $X = (x_1, \ldots, x_{M})^t$.
If no such $X$ satisfy this conditions, then there is no $\Z_2^{11}$-colorable seed $K$ that supports $\overline{\lambda^\R}$ and whose link of the vertex $15$ is $L$.

Applying Algorithm~\ref{algorithm:colorable_seeds} with a revised version of the initialization of Algorithm~\ref{algorithm:main} that takes into account Condition~\eqref{equation:condition_X} on the entries of $X$ yields the following.

\begin{theorem}
	There are exactly $4$ $\Z_2^{11}$-colorable $10$-dimensional seeds of Picard number~$4$.
\end{theorem}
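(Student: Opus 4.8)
The plan is to reduce the enumeration of $\Z_2^{11}$-colorable $10$-dimensional seeds of Picard number~$4$ to a finite computation based on the already-classified dimension~$9$ data, using the link of a well-chosen vertex. First I would invoke the lemma proved just above: any such seed $K$ on $\{1,\dots,15\}$ has a vertex $v$ whose link $\Lk_K(v)$ has Picard number~$4$ (the only remaining possibility after ruling out the product-of-simplices case and the Picard number~$3$ case via the nonexistence of $9$-dimensional Picard number~$3$ seeds). Relabel this vertex as $15$, so that $L \coloneqq \Lk_K(15)$ is a $9$-dimensional $\Z_2^{10}$-colorable PL~sphere supporting an IDCM. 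By Theorem~\ref{theorem:IDCM}(2) applied to the colorability and the IDCM structure, and using that $|\Z_2^4\setminus\{0\}|=15$, fixing an injective $\bar\mu\colon\{1,\dots,14\}\to\Z_2^4$ forces the IDCM $\overline{\lambda^\R}$ of $K$ uniquely, with $\overline{\lambda^\R}|_{\{1,\dots,14\}}=\bar\mu$.

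Next I would enumerate the candidates for $L$: there are $114$ such $9$-dimensional $\Z_2^{10}$-colorable PL~spheres supporting an IDCM (39 seeds plus 75 non-seeds), obtainable from Algorithm~\ref{algorithm:colorable_seeds} by skipping the non-seed-discarding step. For each choice of $L$ (equivalently each $\bar\mu$), I would run the modified version of Algorithm~\ref{algorithm:main}: the key new ingredient is that every $K$ with $\Lk_K(15)=L$ must contain $\hat L = \{\sigma\cup\{15\}\colon\sigma\in L\}$, which pins the coordinates of the characteristic vector $K\in\Z_2^M$ on the index sets $I$ (facets forced to be in $K$) and $J$ (facets through vertex $15$ forced to be out). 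Performing a mod~$2$ column-reduced echelon form $\widetilde B$ of the kernel basis $B$ translates Condition~\eqref{equation:condition_X} into the explicit constraint that $x_t=1$ for $t\le s_I$, $x_t=0$ for $s_I<t\le s_J$, and $x_t$ free otherwise; if this is infeasible, that $(L,\overline{\lambda^\R})$ contributes nothing. I would then apply the remaining filters of Algorithm~\ref{algorithm:colorable_seeds}: discard outputs with $\Pic<4$, discard non-seeds via Proposition~\ref{proposition:seed no couple}, deduplicate up to isomorphism via the minimal-non-face color-sequence method of Section~\ref{subsection:isom}, and finally test PL~sphereness via Lemma~\ref{lemma:PL_sphereness}, using that all vertex links are wedges of already-classified seeds in $\bigcup_{p\le4,\,k<11}\cS^\circ(k,p)$.

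The main obstacle is controlling the combinatorial blow-up: as Table~\ref{table:table_IDCM} shows, running Algorithm~\ref{algorithm:main} naively for $n=11$ takes an estimated three years because $\max_{\overline{\lambda^\R}}|\cX(\cF(\lambda^\R))|\approx 4\mathrm{e}16$. The fix — and the crux of this subsection — is precisely the $\hat L$-forcing: fixing the link of vertex $15$ collapses the free part of $X$ from $64$ generators down to the $s-s_J$ generators beyond index $s_J$ in $\widetilde B$, and since $\hat L$ already contains $f_{9}(C^{10}(14))$ facets meeting vertex~$15$, the number of $K$ extending a given $L$ is tiny. Summing a small finite number of small computations over the $114$ choices of $L$ is feasible, and the output is the claimed $4$ seeds. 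A secondary subtlety worth checking is that distinct $(L,\bar\mu)$ pairs may produce the same seed $K$ (a seed of Picard number~$4$ in dimension~$10$ can have several vertices with Picard-$4$ link), so the final isomorphism deduplication across all $114$ runs is essential rather than cosmetic.
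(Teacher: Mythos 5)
Your proposal matches the paper's own argument essentially step for step: the lemma guaranteeing a vertex whose link has Picard number~$4$, relabelling it as $15$, the unique determination of the IDCM from $\bar\mu$ since $|\Z_2^4\setminus\{0\}|=15$, the $114$ candidate links, the $\hat L$-forcing conditions translated via the column-reduced echelon form into the $s_I$/$s_J$ constraints on $X$, and the final run of Algorithm~\ref{algorithm:colorable_seeds} with its seedness, isomorphism, and PL~sphereness filters. This is the same approach as in Section~\ref{subsection:n=11}, so no further comparison is needed.
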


\subsection{Toric colorability} \label{subsectoin:toric_colorable}

Remember that it is enough to check whether each seed among the $\Z_2^n$-colorable ones supports a characteristic map for obtaining all the toric colorable seeds of Picard number~$4$.

Let $K$ be a $\Z_2^n$-colorable seed of Picard number~$4$, and $\lambda^\R$ a mod~$2$ characteristic map over $K$.
If we regard each image vector of $\lambda^\R$ as a $\{0,1\}$-vector of $\Z^n$, and denote $\lambda$ the obtained map, then $\lambda$ is not necessarily a characteristic map over $K$.
Therefore, we change some $1$'s in the image vectors of $\lambda$ to $-1$'s until it becomes a characteristic map over $K$.
Brute-forcing this method provides at least one characteristic map supported by every $\Z_2^n$-colorable seed we enumerated.
The toric colorability is thus equivalent to the $\Z_2^n$-colorability for PL~spheres of Picard number~$4$.
This yields the full theorem.
\main

\section{Application to the normalized space of rational curves on toric manifolds of Picard number four} \label{sec:min_comp}
This section is devoted to answering a question of Chen, Fu, and Hwang in~\cite{fu2014minimal} and assume the reader is familiar with it.
We also refer to~\cite{Kollar_1995,Hwang_2001} for more details about rational curves on algebraic varieties.
Let $X$ be a toric manifold.
For an irreducible component $\cK$ of the normalized space of rational curves on $X$, denote by $\rho \colon \cU \longrightarrow \cK$ and $\mu \colon \cU \longrightarrow X$ the associated universal family morphisms.
The irreducible component $\cK$ is called a \emph{minimal component} if $\mu$ is dominant and for a general point $x \in X$, the variety $\mu^{-1}(x)$ is complete.
Members of such $\cK$ are called $\emph{minimal rational curves}$ and the \emph{degree} of $\cK$ is defined by the degree of the intersection of the anti-canonical divisor of $X$ with any member in $\cK$.

Recall that $X$ is characterized by a complete non-singular fan with its underlying simplicial complex $K$ on $[m]$ and its primitive ray vectors $\lambda(i)$, for $i \in [m]$.
In particular, $\lambda$ is a characteristic map over $K$.
Conversely, if a characteristic map $\lambda$ over $K$ gives a fan with its underlying simplicial complex $K$, then $\lambda$ is called \emph{fan-giving}.

For each minimal non-face $\{v_1, v_2, \ldots, v_k\}$ of $K$, the set $\{\lambda(v_1), \lambda(v_2), \ldots, \lambda(v_k)\}$ is called a \emph{primitive collection} of $(K, \lambda)$.

\begin{theorem}{{\cite[Proposition~3.2]{fu2014minimal}}}
	Let $X$ be a toric manifold of complex dimension~$n$, and let $(K,\lambda)$ represent its associated fan.
    The minimal components of degree $k$ on $X$ bijectively correspond to primitive collections $\{\lambda(v_1), \lambda(v_2), \ldots, \lambda(v_k)\}$ of $(K, \lambda)$ such that $\lambda(v_1) + \dots + \lambda(v_k) = 0$.
\end{theorem}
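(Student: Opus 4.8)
The plan is to set up a degree-preserving bijection and then verify the two constructions are mutually inverse. Write $T\cong(\C^\ast)^n$ for the torus of $X$ and $N\cong\Z^n$ for the lattice of the fan $\Sigma$ associated with $(K,\lambda)$; recall that a primitive collection is the $\lambda$-image of a minimal non-face of $K$, with \emph{primitive relation} the unique identity $\lambda(v_1)+\dots+\lambda(v_k)=\sum_j a_j w_j$ whose right-hand side is a non-negative combination of rays spanning a cone of $\Sigma$, and that the class $r(P)\in H_2(X;\Z)$ it determines satisfies $-K_X\cdot r(P)=k-\sum_j a_j$ and $D_\rho\cdot r(P)=\mathbf 1_{\rho\in P}-(\text{coefficient of }\lambda(\rho)\text{ on the right})$ (Batyrev, Reid). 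Thus $\lambda(v_1)+\dots+\lambda(v_k)=0$ means the right-hand side is empty, in which case $-K_X\cdot r(P)=k$ and $D_\rho\cdot r(P)=\mathbf 1_{\rho\in P}$.

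For the direction from combinatorics to geometry, given such a $P=\{\lambda(v_1),\dots,\lambda(v_k)\}$ with $\sum_i\lambda(v_i)=0$, I would let $N_P\subseteq N$ be the rank-$(k-1)$ sublattice it spans. Since every proper subset of $P$ is a face of $K$, a standard argument shows the cones of $\Sigma$ contained in $(N_P)_\R$ are exactly those spanned by proper subsets of $\{\lambda(v_i)\}$, so they form the fan of $\mathbb P^{k-1}$; pushing $\Sigma$ forward along $N\to N/N_P$ then gives a complete non-singular fan, hence a toric manifold $X'$ and a toric morphism $\pi\colon X\to X'$ with general fibre $\cong\mathbb P^{k-1}$. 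The family $\cK_P$ of lines in the fibres of $\pi$ has dominant evaluation, and for general $x$ the evaluation fibre over $x$ is the complete space of lines of $\pi^{-1}(\pi(x))\cong\mathbb P^{k-1}$ through $x$; hence $\cK_P$ is a minimal component. As $\pi$ is smooth near $\pi(x)$, adjunction gives $K_X|_{\text{fibre}}=K_{\mathbb P^{k-1}}$, so $\deg\cK_P=-K_{\mathbb P^{k-1}}\cdot(\text{line})=k$ and $[\cK_P]=r(P)$.

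For the harder converse, let $\cK$ be a minimal component of degree $k$. Since $T$ is connected and $\mathrm{RatCurves}(X)$ has finitely many components, $T$ fixes $\cK$, so $\rho,\mu$ are $T$-equivariant. A covering family has free general member, and minimality (properness of $\mu^{-1}(x)$ for general $x$) upgrades this to: the general member $f\colon\mathbb P^1\to X$ is \emph{standard}, $f^\ast T_X\cong\cO(2)\oplus\cO(1)^{\oplus(k-2)}\oplus\cO^{\oplus(n-k+1)}$ (Kollár; Hwang--Mok). I would then recover the fibration of the previous paragraph directly from $\cK$: the tangent directions at a general $x$ of members of $\cK$ span a $(k-2)$-dimensional linear subspace $\mathcal C_x\subseteq\mathbb P(T_xX)$; the induced distribution on the toric manifold $X$ is integrable, with leaves $\cong\mathbb P^{k-1}$, and being $T$-stable it is cut out by a toric morphism $X\to X'$, so that $\cK$ is its family of fibre-lines. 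Reading off the rays of $\Sigma$ lying in a fibre then yields a minimal non-face $P$ with $\sum_{x\in P}x=0$, $|P|=k$, and $\cK=\cK_P$. (An alternative is to specialise a general member under a one-parameter subgroup of $T$ to a $T$-invariant rational $1$-cycle $C_0\in\overline{\cK}$, argue $C_0$ is a single reduced torus-curve $C_\tau$, and translate standardness of $C_\tau$ into the assertion that its wall relation $\lambda(u^+)+\lambda(u^-)+\sum_i a_i\lambda(v_i)=0$ has all $a_i\in\{0,1\}$ and support a minimal non-face $P$, with $|P|=2+\#\{i:a_i=1\}=k$.) Combined with $P\mapsto\cK_P$ this produces mutually inverse maps, since $D_\rho\cdot r(P)=\mathbf 1_{\rho\in P}$ recovers $P$ from $[\cK_P]=r(P)$, and $-K_X\cdot r(P)=|P|$ makes the bijection degree-preserving.

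I expect the crux to be that middle step: turning ``$\cK$ minimal with standard general member'' into ``$[\cK]=r(P)$ for a primitive relation whose right-hand side vanishes''. The delicate points are excluding that the torus-degeneration of a general member breaks into a reducible cycle, and checking that the extracted relation is supported on a \emph{minimal} non-face rather than just a non-face; the most robust route seems to be constructing the $\mathbb P^{k-1}$-fibration from the variety of minimal rational tangents of $\cK$ and then invoking Batyrev's and Reid's description of toric contractions of fibering type, which is precisely where the hypothesis $\lambda(v_1)+\dots+\lambda(v_k)=0$ enters.
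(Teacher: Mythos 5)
First, note that the paper does not prove this statement at all: it is quoted verbatim from Chen--Fu--Hwang \cite[Proposition~3.2]{fu2014minimal}, so there is no internal proof to compare yours against; what follows is an assessment of your sketch on its own terms.

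Your forward direction contains an overreach. From a primitive collection $P$ with $\lambda(v_1)+\dots+\lambda(v_k)=0$ you cannot in general ``push $\Sigma$ forward along $N\to N/N_P$'' to get a complete non-singular fan and a toric morphism $\pi\colon X\to X'$: images of cones under such a projection need not form a fan, and the existence of a fibering contraction is tied to extremality/contractibility of the class $r(P)$ (Reid's description applies to extremal rays of projective $X$), neither of which is assumed here --- indeed $X$ need not even be projective, and already for $\mathrm{Bl}_{pt}(\mathbb{P}^1\times\mathbb{P}^1)$ the zero-sum class is not extremal. The statement you actually need is weaker and local: through a general point $x$ the closure of the orbit of the subtorus with cocharacter lattice $N_P$ is isomorphic to $\mathbb{P}^{k-1}$ (this follows by slicing $\Sigma$ with $(N_P)_\R$ and checking the slice is the fan of $\mathbb{P}^{k-1}$; your parenthetical ``standard argument'' that no other cones lie in $(N_P)_\R$ is exactly the point that must be argued), and one must then still show that the lines in these orbit closures form an entire irreducible component of $\mathrm{RatCurves}(X)$, i.e.\ that deformations of such a line do not escape the translates of the orbit closure --- a step your sketch omits.

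The converse, which you correctly identify as the crux, is only a programme in your write-up. Neither route is carried out: for the VMRT route you assert, without proof, that the distribution spanned by the minimal rational tangents is integrable with leaves $\mathbb{P}^{k-1}$ and is cut out by a toric morphism (the last claim runs into the same non-existence issue as above); for the degeneration route you explicitly leave open the two delicate points, namely excluding that the torus specialization of a general member becomes a reducible invariant $1$-cycle, and verifying that the support of the extracted relation is a \emph{minimal} non-face with vanishing sum rather than merely a non-face. Since these are precisely the steps that make the correspondence a bijection onto zero-sum primitive collections, the proposal as written has a genuine gap; the bookkeeping you do with $-K_X\cdot r(P)=|P|$ and $D_\rho\cdot r(P)=\mathbf{1}_{\rho\in P}$ is fine, but it only shows the maps are inverse once both constructions have been established.
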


We consider two primitive collections $V = \{\lambda(v_1), \lambda(v_2), \ldots, \lambda(v_k)\}$ and $W = \{\lambda(w_1),\ldots, \lambda(w_l)\}$ that correspond to two minimal components.
Assume that they intersect, so without loss of generality, $\lambda(v_k) = \lambda(w_l)$.
Then \begin{align*}
	\lambda(v_1) + \dots + \lambda(v_{k-1}) &= -\lambda(v_k)
	= -\lambda(w_l)
	= \lambda(w_1) + \dots + \lambda(w_{l-2}).
\end{align*}
This means that the two cones generated by $V \setminus \{v_k \}$ and $W \setminus \{w_l \}$ are the same.
Hence for two primitive collections $V$ and $W$ corresponding to minimal components, there are only two possibilities: either $V=W$, or $V \cap W = \varnothing$.
Using this property and the previous theorem one obtains the following inequality.
\begin{proposition}{{\cite[Proposition~3.5]{fu2014minimal}}}\label{proposition:inequality}
	Let $X$ be a toric manifold of complex dimension~$n$ and Picard number~$p$.
	Then \begin{align*}
		\sum_{k=0}^{n-1} n_k(k+2) \leq n+p,
	\end{align*}
	where $n_k$ is the number of minimal components in the normalized space of rational curves on $X$ of degree $k+2$.
\end{proposition}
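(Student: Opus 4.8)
The plan is to turn the inequality into a counting statement about the rays of the fan. Set $m=n+p$ for the number of rays of the fan describing $X$, equivalently the number of vertices of its underlying PL~sphere $K$, and recall that the ray map $\lambda$ is injective; thus every primitive collection of $(K,\lambda)$ may be viewed as a subset of the $m$-element ray set $\{\lambda(1),\dots,\lambda(m)\}$.

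First I would apply the classification of minimal components recalled above: by \cite[Proposition~3.2]{fu2014minimal}, a minimal component of degree $d$ corresponds bijectively to a primitive collection $\{\lambda(v_1),\dots,\lambda(v_d)\}$ of $(K,\lambda)$ with $\lambda(v_1)+\dots+\lambda(v_d)=0$. In particular, each of the $n_k$ minimal components of degree $k+2$ carries a primitive collection consisting of exactly $k+2$ distinct rays (distinctness using the injectivity of $\lambda$). Second, I would invoke the dichotomy established just before the statement: two primitive collections $V,W$ attached to minimal components satisfy either $V=W$ or $V\cap W=\varnothing$. Hence the primitive collections attached to the pairwise distinct minimal components of $X$ form a family of pairwise disjoint subsets of the ray set, and summing cardinalities gives a disjoint union of size $\sum_{k=0}^{n-1} n_k(k+2)$ contained in the set of all $m$ rays. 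This yields $\sum_{k=0}^{n-1} n_k(k+2)\le m=n+p$, as desired.

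Given these two inputs, the argument is essentially immediate, so I do not expect a genuine obstacle; the only points worth a line of justification are that $\lambda$ is injective (so a degree-$(k+2)$ minimal component really occupies $k+2$ distinct rays, not fewer) and that the index $k$ need only range over $\{0,\dots,n-1\}$. The latter is automatic: a primitive collection summing to zero cannot consist of a single nonzero vector, so it has at least two elements, and since it is the image of a minimal non-face of $K$, each of its proper subsets spans a face of the $(n-1)$-dimensional complex $K$ and hence has at most $n$ elements, forcing the collection itself to have at most $n+1$ elements, i.e. degree at most $n+1$.
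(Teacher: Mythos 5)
Your proposal is correct and follows essentially the same route as the paper (and as \cite{fu2014minimal}): combine the bijection of \cite[Proposition~3.2]{fu2014minimal} between minimal components and zero-sum primitive collections with the dichotomy that two such collections are either equal or disjoint, then count rays. The extra remarks on the injectivity of $\lambda$ and on the range of admissible degrees are fine but not points where the paper's argument differs.
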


We consider the fan associated to $X$, represented by a pair $(K, \lambda)$.
Through a direct interpretation of the inequality, the equality holds if and only if there is a partition $P$ of the vertex set of $K$ such that for each $\sigma = \{v_1, \ldots, v_k\} \in P$, $\sigma$ is a minimal non-face of $K$, and $\lambda(v_1) + \dots + \lambda(v_k) = 0$.
Let us call such partition \emph{optimal}.
The rest of this section is devoted to finding in which cases there exists such an optimal partition for $K$ of Picard number~$\leq 4$ that supports a fan-giving characteristic map.

Firstly, observe that the left multiplication of a characteristic matrix by an invertible matrix does not affect whether it has an optimal partition.
Such two matrices are called Davis-Januszkiewicz equivalent (or simply \emph{D-J equivalent}).
Hence, we suppose that the first $n$ columns of an $n \times m$ characteristic matrix over $K$ form the $n \times n$ identity matrix with the assumption that $\{1, 2, \ldots, n\}$ is a facet of $K$.

For a simplicial complex $K$, assume that the following two matrices are (mod~$2$) characteristic maps over $K$;
\begin{align} \label{equation: characteristic maps} \begin{split} \lambda &=\begin{bmatrix} 1 & \mathds{O} & \textbf{a} \\
			\mathds{O} & I_{n-1} & A
		\end{bmatrix}\\
		\mu &=\begin{bmatrix} 1 & \mathds{O} & \textbf{b} \\
			\mathds{O} & I_{n-1} & A
		\end{bmatrix}.
	\end{split}
\end{align}
Then, using the notations introduced in \cite{Choi-Vallee2022}, the matrix \begin{align} \label{equation: wedged characteristic map}
	\lambda \wedge_1 \mu = \begin{bmatrix}
		1 & 0 & \mathds{O} & \textbf{a} \\
		0 & 1 & \mathds{O} & \textbf{b} \\
		\mathds{O} & \mathds{O} & I_{n-1} & A
	\end{bmatrix}
\end{align}
is a (mod~$2$) characteristic map over $\wed_1(K)$ if it satisfies the (mod~$2$) non-singularity condition.
For any other vertex $v$, one can construct a (mod~$2$) characteristic map over $\wed_v(K)$ from two (mod~$2$) characteristic maps over $K$ similarly.

\begin{theorem}{\cite{Choi-Park2016}} \label{theorem: wedge fan-giving}
	For a vertex $v$ of a simplicial complex $K$, every (mod~$2$) characteristic map over $\wed_v(K)$ is of the form $\lambda \wedge_v \mu$ for two (mod~$2$) characteristic maps $\lambda$ and $\mu$ over $K$ up to D-J equivalence.
	Moreover, $\lambda \wedge_v \mu$ is fan-giving if and only if both $\lambda$ and $\mu$ are fan-giving.
\end{theorem}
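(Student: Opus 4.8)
\emph{Proof strategy.} The plan is to prove the two assertions in turn; the common thread is that each of the two new vertices of $\wed_v(K)$ has link isomorphic to $K$. Write $v_1,v_2$ for the two new vertices of $L:=\wed_v(K)$ replacing $v$. From the definition of the wedge one reads off that $\{v_1,v_2\}$ is a face of $L$; that $\{v_1,v_2\}\cup(F\setminus v)$ is a facet of $L$ for every facet $F\ni v$ of $K$, while $\{v_1\}\cup F$ and $\{v_2\}\cup F$ are facets of $L$ for every facet $F\not\ni v$ of $K$; and that $\Lk_L(v_1)\cong K\cong\Lk_L(v_2)$, with the deleted new vertex playing the role of $v$.

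\emph{The decomposition.} Let $\Lambda$ be a (mod~$2$) characteristic map over $L$. Fix a facet $F_0\ni v$ of $K$, which after relabelling we may take to be $\{1,\dots,n\}$ with $v=1$; then $\{v_1,v_2\}\cup(F_0\setminus v)$ is a facet of $L$, so $\Lambda$ sends it to a basis and a D-J-equivalence brings $\Lambda$ into exactly the block form displayed in~\eqref{equation: wedged characteristic map}. Deleting the row corresponding to $\Lambda(v_2)$ together with the column $v_2$ (and likewise with $v_1$) produces, by the elementary fact that removing one member of a unimodular set and passing to the quotient leaves the remaining vectors unimodular, a (mod~$2$) characteristic map over $\Lk_L(v_2)\cong K$ (respectively over $\Lk_L(v_1)\cong K$); these are precisely the matrices $\lambda$ and $\mu$ of~\eqref{equation: characteristic maps}, and by inspection $\Lambda=\lambda\wedge_v\mu$. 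I expect this part to be routine, since no geometric input enters.

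\emph{Fan-giving, necessity.} Suppose $\Lambda=\lambda\wedge_v\mu$ is fan-giving, with resulting complete fan $\Sigma$ in $\R^{n+1}$. I would invoke the standard construction of the quotient (star) fan along a ray: the cones of $\Sigma$ containing the ray $\R_{\geq0}\,\Lambda(v_2)$, pushed forward to $\R^{n+1}/\R\,\Lambda(v_2)\cong\R^n$, form a complete fan---it is the fan of the orbit closure of that ray, complete since $\Sigma$ is. Its underlying complex is $\Lk_L(v_2)\cong K$ and its ray generators are the images of the columns of $\Lambda$, so it is exactly the candidate fan for $(K,\lambda)$; hence $\lambda$ is fan-giving. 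Symmetrically, using the ray $\R_{\geq0}\,\Lambda(v_1)$ shows $\mu$ is fan-giving.

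\emph{Fan-giving, sufficiency---the crux.} Assume $\Sigma_\lambda$ and $\Sigma_\mu$ are complete fans in $\R^n$ realizing $K$. By the facet description of $L$, the candidate maximal cones over $L$ come in three shapes: $\cone(\Lambda(\{v_1,v_2\}\cup(F\setminus v)))$ for a facet $F\ni v$ of $K$, and $\cone(\Lambda(\{v_i\}\cup F))$ with $i=1,2$ for a facet $F\not\ni v$ of $K$. I would show these cones and their faces tile $\R^{n+1}$ by working with the two coordinate projections $\pi_1,\pi_2\colon\R^{n+1}\to\R^n$ with kernels $\R\,\Lambda(v_2)$ and $\R\,\Lambda(v_1)$: these carry $\Lambda$ to $\lambda$ and to $\mu$ respectively (each sending the corresponding new vertex to the origin), so that a point $p\in\R^{n+1}$ can be located by first placing $\pi_1(p)$ in $\Sigma_\lambda$ and $\pi_2(p)$ in $\Sigma_\mu$ and then determining which cone type over the relevant facets of $K$ actually contains $p$. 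Alternatively one could use that $L$ is a PL~sphere and check, at every ridge of $L$, that its two facets lie on opposite sides of the corresponding wall, after matching the three families of ridges of $L$ with the ridges of $K$ and observing that each wall-crossing relation of $\Lambda$ restricts to the corresponding one for $\lambda$ or for $\mu$, which is as required by hypothesis. The main obstacle will be precisely this bookkeeping---identifying the walls of $L$ with those of $K$, tracking which of $\lambda,\mu$ governs each, and, above all, upgrading the local wall conditions to the global statement that the three families of cones cover $\R^{n+1}$ without overlap, which is exactly where the completeness---not merely the local fan-likeness---of $\Sigma_\lambda$ and $\Sigma_\mu$ must be used.
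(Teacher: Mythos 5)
This statement is quoted from \cite{Choi-Park2016}; the paper gives no proof of its own, so your attempt can only be judged on its own terms. The decomposition part and the ``only if'' part of your argument are fine: reducing $\Lambda$ to the block form~\eqref{equation: wedged characteristic map} via a facet of $\wed_v(K)$ containing both new vertices, and then deleting the row of $\Lambda(v_i)$ together with the column $v_i$, does produce characteristic maps over $\Lk_{\wed_v(K)}(v_i)\cong K$ with $\Lambda=\lambda\wedge_v\mu$; and the star (quotient) fan along the ray of $\Lambda(v_2)$, resp.\ $\Lambda(v_1)$, is indeed a complete non-singular fan realizing $(K,\lambda)$, resp.\ $(K,\mu)$, which gives necessity.

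The sufficiency direction, however, is not proved: you present two candidate strategies and explicitly leave open the step you yourself identify as the crux, namely upgrading local wall conditions to the global statement that the candidate cones cover $\R^{n+1}$ without overlap. Moreover, the first strategy as described does not work as stated: the projections $\pi_1,\pi_2$ only carry the candidate cones \emph{containing} $v_2$ (resp.\ $v_1$) onto cones of $\Sigma_\lambda$ (resp.\ $\Sigma_\mu$). For a facet of type $\{v_1\}\cup F$ with $F\not\ni v$ a facet of $K$, one has $\pi_1\bigl(\cone(\Lambda(\{v_1\}\cup F))\bigr)=\cone\bigl(\lambda(\{v\}\cup F)\bigr)$, and $\{v\}\cup F$ is not a face of $K$, so this image is a non-simplicial full-dimensional cone that is not a cone of $\Sigma_\lambda$; hence ``placing $\pi_1(p)$ in $\Sigma_\lambda$ and $\pi_2(p)$ in $\Sigma_\mu$'' does not by itself locate $p$ in a unique candidate cone, and a genuinely finer case analysis (or a proper covering-space/degree argument over the boundary sphere, with the low-dimensional case handled separately) is needed. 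Until that is carried out, the equivalence claimed in the second sentence of the theorem is only established in one direction, so the proposal has a genuine gap exactly where you flagged it.
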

\begin{remark}
	Let $v$ be a vertex of $K$.
	Notice that by taking any two (mod~$2$) characteristic maps $\lambda$ and $\mu$ over $K$ we cannot always construct the matrix ``$\lambda\wedge_v \mu$".
	However, for a single (mod~$2$) characteristic map over $K$, the matrix $\lambda \wedge_v \lambda$ can always be constructed.
	Such (mod~$2$) characteristic map over $\wed_v(K)$ is called the \emph{canonical extension} of $\lambda$ at $v$.
	For more details on the compatibility of the operation $\wedge_v$ in the mod~$2$ case, see~\cite{Choi-Vallee2022}.
\end{remark}

\begin{lemma} \label{lemma: wedge optimal partition}
	For a simplicial complex $K$ and a vertex $v$ of $K$, let $\Lambda = \lambda \wedge_v \mu$ be a fan-giving characteristic map over $\wed_v(K)$, where $\lambda$ and $\mu$ are characteristic maps over $K$.
	Then $(\wed_v(K), \Lambda)$ has an optimal partition if and only if both $(K, \lambda)$ and $(K, \mu)$ have optimal partitions.
\end{lemma}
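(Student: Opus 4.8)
The plan is to unwind the combinatorics of the wedge operation on minimal non-faces, and match this up with the block structure of $\Lambda = \lambda \wedge_v \mu$ displayed in \eqref{equation: wedged characteristic map}. Write $w_1, w_2$ for the two new vertices replacing $v$ in $\wed_v(K)$, so that the vertex set of $\wed_v(K)$ is $(V(K)\setminus\{v\})\cup\{w_1,w_2\}$, and $\Lambda(w_1) = (1,0,\mathds{O})^t$, $\Lambda(w_2) = (0,1,\mathds{O})^t$ when $v$ plays the role of vertex $1$, while $\Lambda(u) = (0,0,\lambda(u))^t = (0,0,\mu(u))^t$ for every old vertex $u\neq v$ (recall $\lambda$ and $\mu$ agree on all columns except the one for $v$). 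The first step is to recall the precise description of $\MNF(\wed_v(K))$: a minimal non-face of $\wed_v(K)$ is either (i) a minimal non-face $\tau$ of $K$ with $v\notin\tau$, regarded unchanged; or (ii) $(\tau\setminus\{v\})\cup\{w_1,w_2\}$ where $\tau$ is a minimal non-face of $K$ with $v\in\tau$. This follows directly from $\wed_v(K) = (I * \Lk_K(v))\cup(\partial I * (K\setminus v))$ together with Proposition~\ref{proposition:seed no couple}-style bookkeeping, and is exactly the dictionary used in \cite{Choi-Vallee2022}.

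Given this dictionary, I would prove the two directions. For the ``if'' direction, suppose $P_\lambda$ and $P_\mu$ are optimal partitions of $V(K)$ for $(K,\lambda)$ and $(K,\mu)$ respectively. Let $\sigma_\lambda\in P_\lambda$ and $\sigma_\mu\in P_\mu$ be the blocks containing $v$. Then $\lambda(\sigma_\lambda) = 0$ and $\mu(\sigma_\mu) = 0$. I claim $$P := \bigl(P_\lambda\setminus\{\sigma_\lambda\}\bigr)\cup\bigl(P_\mu\setminus\{\sigma_\mu\}\bigr)\cup\bigl\{(\sigma_\lambda\setminus\{v\})\cup\{w_1\},\ (\sigma_\mu\setminus\{v\})\cup\{w_2\}\bigr\}$$ is not quite right, because the blocks of $P_\lambda\setminus\{\sigma_\lambda\}$ and $P_\mu\setminus\{\sigma_\mu\}$ both live on $V(K)\setminus\{v\}$ and would overlap. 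The correct construction is more subtle: the point is that $\MNF(\wed_v(K))$ of type (ii) requires a block to contain \emph{both} $w_1$ and $w_2$. So an optimal partition $P$ of $\wed_v(K)$ must have a single block $B$ containing $\{w_1,w_2\}$, of the form $B = (\tau\setminus\{v\})\cup\{w_1,w_2\}$ for some $\tau\in\MNF(K)$ with $v\in\tau$, and the summing-to-zero condition $\Lambda(B) = 0$ reads $\Lambda(w_1)+\Lambda(w_2)+\sum_{u\in\tau\setminus\{v\}}\Lambda(u) = 0$, i.e. $(1,0,\mathds{O})^t + (0,1,\mathds{O})^t + \sum_{u\in\tau\setminus v}(0,0,\lambda(u))^t = 0$ — but the first two coordinates give $(1,1)\neq(0,0)$, a contradiction. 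So in fact I expect the construction to use a block containing $w_1$ but not $w_2$ together with a block containing $w_2$ but not $w_1$; re-examining the type-(ii) description shows that $w_1,w_2$ always appear \emph{together} in a type-(ii) minimal non-face, so a partition block containing exactly one of them can only be a type-(i) minimal non-face — impossible since those avoid both. This tension is the heart of the matter, and resolving it correctly (perhaps the statement forces the block to be $\{w_1,w_2\}$ itself, using that $\{w_1,w_2\}$ is genuinely a minimal non-face of $\wed_v(K)$ whenever $\{v\}$... no, $v$ is a vertex) is the step I flag as the main obstacle; I would settle it by carefully re-deriving $\MNF(\wed_v(K))$ and checking whether $\{w_1,w_2\}\in\MNF(\wed_v(K))$, which it is not, since $w_1,w_2\in\Lk_{\wed_v(K)}$-faces via $I$. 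Once the admissible shape of the $\{w_1,w_2\}$-block is pinned down, the correspondence between optimal partitions of $(\wed_v(K),\Lambda)$ and pairs of optimal partitions of $(K,\lambda)$ and $(K,\mu)$ becomes a direct translation: the $\{w_1,w_2\}$-block $(\tau\setminus\{v\})\cup\{w_1,w_2\}$ splits off $\tau$ as the $v$-block of \emph{both} $P_\lambda$ and $P_\mu$ (forcing $\lambda(\tau)=\mu(\tau)=0$, consistent since $\lambda,\mu$ differ only at $v$ and the first-coordinate slot decouples), and the remaining type-(i) blocks, all avoiding $w_1,w_2$, are simultaneously blocks of $P_\lambda$ and $P_\mu$ on $V(K)\setminus\{v\}$.

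For the ``only if'' direction, I would run the same dictionary in reverse: given an optimal partition $P$ of $(\wed_v(K),\Lambda)$, locate its unique block $B\ni w_1$ (which also contains $w_2$ by the type-(ii) structure), write $B = (\tau\setminus\{v\})\cup\{w_1,w_2\}$, set the $v$-block of the candidate partitions to be $\tau$, and take all other blocks of $P$ verbatim; the summing conditions for $\Lambda$ descend to summing conditions for $\lambda$ and for $\mu$ because on old vertices $\Lambda$ restricted to the last $n-1$ coordinates is exactly $\lambda = \mu$ there, and the top $2\times 2$ block of $\Lambda$ contributes $\Lambda(w_1)+\Lambda(w_2) = (1,1,\mathds O)^t$ in the first two coordinates, which must be cancelled — hence the first coordinate forces, after projecting away $w_2$, precisely $\lambda(v) = (1,\mathds O)^t$ appears once in the $\lambda$-block, matching the normalization in \eqref{equation: characteristic maps}. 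I would close by noting that fan-giving-ness of $\Lambda$, $\lambda$, $\mu$ is already linked by Theorem~\ref{theorem: wedge fan-giving}, so no extra work is needed there; the lemma is purely about the optimal-partition condition, and the argument is entirely the $\MNF$ bookkeeping above together with the explicit block form of $\lambda\wedge_v\mu$.
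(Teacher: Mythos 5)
Your key translation of the columns of $\Lambda=\lambda\wedge_v\mu$ is incorrect, and the ``tension'' you flag as the main obstacle is an artifact of that mistake rather than a genuine feature of the problem. In the normal forms \eqref{equation: characteristic maps}, $\lambda$ and $\mu$ do \emph{not} ``agree on all columns except the one for $v$'': they agree at $v$ (both columns are $e_1$) and at the vertices $2,\dots,n$, and they may differ at every vertex $u\in\{n+1,\dots,m\}$, where their first coordinates are $a_u$ and $b_u$ respectively. Accordingly, in \eqref{equation: wedged characteristic map} the column of $\Lambda$ at such a vertex is $(a_u,b_u,A_u)^t$, not $(0,0,\ast)^t$. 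With the correct columns there is no contradiction for the block containing the two new vertices: if $B=(\tau\setminus\{v\})\cup\{w_1,w_2\}$ with $\tau\in\MNF(K)$ and $v\in\tau$, then the first coordinate of $\sum_{u\in B}\Lambda(u)$ is $1+\sum_{u\in\tau,\,u>n}a_u$, which is exactly the first coordinate of $\sum_{u\in\tau}\lambda(u)$ (the $1$ contributed by $\Lambda(w_1)$ plays the role of the $1$ from $\lambda(v)=e_1$), the second coordinate is likewise the first coordinate of $\sum_{u\in\tau}\mu(u)$, and the remaining $n-1$ coordinates are the common lower parts. Hence $\Lambda(B)=0$ if and only if $\lambda(\tau)=0$ and $\mu(\tau)=0$. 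Had your claimed contradiction been real, it would show that $(\wed_v(K),\Lambda)$ \emph{never} admits an optimal partition (some block must contain $w_1$, and every minimal non-face containing $w_1$ contains $w_2$), which is already false for the wedge of the square with the canonical extension of $\begin{bmatrix}1&0&-1&0\\0&1&0&-1\end{bmatrix}$, where $\{w_1,w_2,3\},\{2,4\}$ is an optimal partition.

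Because the step you defer is the crux, the proposal does not constitute a proof, and the surrounding assertions also lean on the faulty dictionary: a type-(i) block $B$ (avoiding $w_1,w_2$) is not ``automatically'' good for both $\lambda$ and $\mu$ because ``$\lambda=\mu$ there''; rather, the first and second rows of $\Lambda$ separately record the $\lambda$- and $\mu$-conditions, so $\Lambda(B)=0$ is the conjunction of $\lambda(B)=0$ and $\mu(B)=0$. This componentwise splitting, applied to every block (with $\{w_1,w_2\}$ collapsed back to $v$), is precisely the paper's short argument; your $\MNF$ bookkeeping (types (i)/(ii)) is the right combinatorial frame and is left implicit there. Finally, for the ``if'' direction you never actually exhibit a partition of the vertex set of $\wed_v(K)$: the splitting shows that an optimal partition of $(\wed_v(K),\Lambda)$ corresponds to a \emph{single} partition of the vertex set of $K$ into minimal non-faces whose blocks sum to zero simultaneously for $\lambda$ and for $\mu$, so a complete proof must produce such a common partition from $P_\lambda$ and $P_\mu$ (your first attempt correctly observes that naively uniting them fails); this point remains unaddressed in your sketch.
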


\begin{proof}
	Without loss of generality, it is enough to consider the case $v=1$.
	Then we can additionally assume that characteristic maps are of the forms~\eqref{equation: characteristic maps} and~\eqref{equation: wedged characteristic map}.
	Then for a subset $\{v_1, \ldots, v_k\}$ of the vertex set of $K$, $\Lambda(v_1) + \dots + \Lambda(v_k) = 0$ if and only if the sum is zero component-wise.
	Hence $\Lambda(v_1) + \dots + \Lambda(v_k) = 0$ if and only if $\lambda(v_1) + \dots + \lambda(v_k) = 0$ and $\mu(v_1) + \dots + \mu(v_k) = 0$.
\end{proof}

By Theorem~\ref{theorem: wedge fan-giving} and Lemma~\ref{lemma: wedge optimal partition}, it is enough to investigate only seeds of Picard number~$p$ to determine which fan of Picard number~$p$ has an optimal partition.
Then, by Theorem~\ref{thm:main}, we obtain the following corollary.
\begin{corollary}\label{corollary:min_comp}
	A PL~sphere $K$ of Picard number~$p \leq 4$ supports a fan-giving characteristic map $\lambda$ that has an optimal partition if and only if $K$ is achieved by a sequence of wedge operations from the boundary of
	\begin{itemize}
		\item a $1$-simplex if $p=1$,
		\item a square if $p=2$,
		\item a $3$-dimensional cross polytope if $p=3$, and
		\item either a hexagon or a $4$-dimensional cross polytope if $p=4$.
	\end{itemize}
	Moreover, each of the listed seeds supports a unique fan-giving characteristic map with an optimal partition, and thus $\lambda$ is obtained by sequential canonical extensions.
\end{corollary}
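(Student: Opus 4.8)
The plan is to reduce the statement to a finite verification on seeds, using the wedge-theoretic machinery already established. By Theorem~\ref{theorem: wedge fan-giving} and Lemma~\ref{lemma: wedge optimal partition}, if $K = \wed_v(L)$ then $K$ supports a fan-giving characteristic map with an optimal partition if and only if $L$ does (and the maps on $K$ arise as $\lambda\wedge_v\mu$ from such maps on $L$). Iterating this, the property ``supports a fan-giving characteristic map with an optimal partition'' descends along wedge operations all the way down to the underlying seed. Hence it suffices to examine, for each $p\le 4$, exactly the toric colorable seeds enumerated in Theorem~\ref{thm:main}, and for each such seed $K$ decide whether some fan-giving characteristic map $\lambda$ over $K$ admits an optimal partition, i.e.\ a partition of $[m]$ into minimal non-faces $\sigma$ of $K$ with $\sum_{i\in\sigma}\lambda(i)=0$.

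For $p=1,2,3$ the seed list is short (the boundaries of $\Delta^1$, the square, the pentagon, the $3$-dimensional cross polytope, and $C^4(7)$, per Theorem~\ref{thm:main} and the discussion in Section~\ref{method:kernel}), so one checks these by hand. The boundary of the cross polytope $\partial\Delta^1\ast\cdots\ast\partial\Delta^1$ of Picard number $p$ has minimal non-faces exactly the $p$ ``diagonal'' pairs, and the standard fan (vectors $e_1,\dots,e_n,-e_1,\dots$) makes each such pair sum to zero, giving the optimal partition; this handles $p=1$ ($\partial\Delta^1$), $p=2$ (the square), and contributes the cross polytope case for $p=3$ and $p=4$. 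One then verifies the pentagon has no optimal partition (an odd number of vertices cannot be partitioned into minimal non-faces, which all have size $\ge 2$, unless sizes mix appropriately — here they cannot), and similarly $C^4(7)$ fails. For $p=4$ the remaining work is to run through the $3153$ seeds of Theorem~\ref{thm:main}: for each, enumerate its minimal non-faces, and search over fan-giving characteristic maps (finitely many up to D-J equivalence, obtained by sign choices as in Section~\ref{subsectoin:toric_colorable}) for one whose zero-sum minimal non-faces partition the vertex set. The claim is that this search succeeds only for the hexagon $\partial\Delta^1\ast\partial\Delta^1$... wait — the hexagon here is the $2$-dimensional cyclic-type seed; one checks directly that the hexagon (the $6$-gon) has a fan, namely the one with ray vectors the six vectors $\pm e_1, \pm e_2, e_1{+}e_2 \text{ or similar}$ arranged so that three opposite pairs of vertices are minimal non-faces summing to zero, yielding an optimal partition into three pairs.

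The uniqueness and ``sequential canonical extension'' assertions follow from the same descent: by Theorem~\ref{theorem: wedge fan-giving} a fan-giving map on $\wed_v(L)$ with an optimal partition forces fan-giving maps with optimal partitions on both copies of $L$, and if $L$'s such map is unique then both copies coincide, so the wedged map is the canonical extension $\lambda\wedge_v\lambda$; thus it remains to check on each of the finitely many relevant seeds that the fan-giving characteristic map with an optimal partition is unique up to D-J equivalence, which is a direct computation on the cross polytopes (the standard fan is the only one, by rigidity of the diagonal primitive collections) and on the hexagon. The main obstacle is the $p=4$ case: it is a genuine finite computation over $3153$ seeds, each requiring enumeration of minimal non-faces and a search over sign patterns for a fan-giving map with a zero-sum partition, together with a check that no other seed accidentally admits one — this is where care (and the database referenced in the introduction) is needed, whereas the structural reduction via Lemma~\ref{lemma: wedge optimal partition} is routine.
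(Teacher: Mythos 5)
Your proposal takes essentially the same route as the paper: reduce to seeds via Theorem~\ref{theorem: wedge fan-giving} and Lemma~\ref{lemma: wedge optimal partition}, then settle the finitely many toric colorable seeds of Picard number~$\leq 4$ by hand (cross polytopes, pentagon, hexagon) or by a finite search over characteristic maps, which is exactly what the paper does (it additionally prunes the $p=4$ search via mod~$2$ ``weakly optimal'' partitions and disposes of $C^4(7)$ by citing that it admits no fan-giving characteristic map at all, rather than by a parity-type argument). The argument and the uniqueness/canonical-extension descent are correct and structurally identical to the paper's proof.
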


Before proving Corollary~\ref{corollary:min_comp}, we need some preliminary results about the join of two simplicial complexes $K$ and $L$ on $\{1, 2, \ldots, m_1\}$ and $\{m_1+1, m_1+2, \ldots, m_2\}$, respectively.
Any minimal non-face of $K*L$ is a minimal non-face of either  $K$ or $L$.
In addition, a (mod~$2$) characteristic map $\lambda$ over $K*L$ can be represented as \begin{align*}
	\lambda = \begin{bmatrix}
		\lambda_{11} & \lambda_{12} \\
		\lambda_{21} & \lambda_{22}
	\end{bmatrix},
\end{align*}
where $\lambda_{11}$ is a characteristic map over $K$, and $\lambda_{22}$ is a characteristic map over $L$.
Furthermore, by D-J equivalence, if $\dim(K)=n_1-1$, then we can assume that the first $n_1$ columns of $\lambda_{11}$ is an identity matrix, and the first $n_1$ columns of $\lambda_{21}$ is zeros, and a similar argument holds for $\lambda_{12}$ and $\lambda_{22}$.

\begin{proof}[Proof of Corollary~\ref{corollary:min_comp}]
 	We first consider the boundary of any cross polytope.
	Recall that the suspension of $K$ is the join of a $0$-sphere $S^0$ and $K$.
	Then any characteristic map $\lambda$ over $S^0 \ast S^0$ is of the form
	\begin{align*}
		\begin{bmatrix}
			1 & \pm 1 & 0 & b \\
			0 & a & 1 & \pm 1
		\end{bmatrix}
	\end{align*} up to D-J equivalence if the vertex set of each $S^0$'s are $\{1, 2\}$ and $\{3, 4\}$.
	Hence $(S^0 \ast S^0, \lambda)$ has an optimal partition if and only if
	\begin{align*}
		\lambda = \begin{bmatrix}
			1 & -1 & 0 & 0 \\
			0 & 0 & 1 & -1
		\end{bmatrix}.
	\end{align*} up to D-J equivalence.
	Since $S^0 \ast \dots \ast S^0$ has minimal non-faces $\{1,2\}, \{3,4\}, \ldots$, a similar argument holds, so $(S^0 \ast \dots \ast S^0, \lambda)$
	has an optimal partition if and only if $\lambda$ is a block diagonal matrix whose block diagonal elements are all $\begin{bmatrix}
		1 & -1
	\end{bmatrix}$
	up to D-J equivalence.
	Note that any toric colorable seed of Picard number~$1$ is $S^0$, and that of Picard number~$2$ is $S^0 \ast S^0$.
	
	For Picard number~$3$, there are two toric colorable seeds that are not the boundary of a $3$-dimensional cross polytope:
	\begin{enumerate}
		\item the boundary of a pentagon.
		\item the boundary of a $4$-dimensional cyclic polytope $C^4(7)$ with $7$ vertices.
	\end{enumerate}
	For (1), one can check that there is no partition of the vertex set consisting of minimal non-faces.
	For (2), there is no fan-giving characteristic map from \cite{Choi-Park2016}.
	
	Finally, for Picard number~$4$, we use the list of toric colorable seeds obtained in Theorem~\ref{thm:main}.
	Similarly to the classification of toric colorable PL~spheres, we approach it from the mod~$2$ characteristic map perspective.
	Suppose that $P$ is an optimal partition for $(K, \lambda)$.
	Then for any $\{v_1, \ldots, v_k \} \in P$, the sum $\lambda(v_1) + \dots + \lambda(v_k)$ is also zero in mod~$2$.
	For a mod~$2$ characteristic map $\lambda^\R$ over $K$, we call a partition $P$ of $[m]$ \emph{weakly  optimal} if $P$ satisfies the condition of optimal partition with a mod~$2$ characteristic map $\lambda^\R$ instead of a characteristic map.
	Since $K$ has finitely many mod~$2$ characteristic maps, we can investigate all possibilities.
	For an $(n-1)$-dimensional regular seed, there is no partition consisting of minimal non-faces if $n$ is $10$ or $11$.
	Moreover, the boundary of the hexagon is the only Picard number~$4$ toric colorable regular seed which has a weakly optimal partition.
	More precisely, consider the boundary $K$ of the hexagon whose facets are $\{1,2\}, \{1,6\}, \{2,3\}, \{3,4\}, \{4,5\}, \{5,6\}$.
	Then there are four partitions $P_1 = \{\{1, 3\}, \{2, 5\}, \{4, 6\}\}$, $P_2 = \{\{1, 4\}, \{2, 6\}, \{3, 5\}\}$, $P_3=\{\{1, 5\}, \{2, 4\}, \{3, 6\}\}$, and $P_4 = \{\{1, 4\}, \{2, 5\}, \{3, 6\}\}$.
	Suppose that $P_1$ is an optimal partition of $(K, \lambda)$.
	We can choose a D-J class with $\lambda(1) = \begin{bmatrix}
		1 \\
		0
	\end{bmatrix}$, $\lambda(2) = \begin{bmatrix}
		0 \\
		1
	\end{bmatrix}$.
	Then by optimality of $P_1$, $\lambda(3) = \begin{bmatrix}
		-1 \\
		0
	\end{bmatrix}$, $\lambda(5) = \begin{bmatrix}
		0 \\
		-1
	\end{bmatrix}$.
	By fan-givingness of $\lambda$, $\lambda(4) = \begin{bmatrix}
		-1 \\
		-1
	\end{bmatrix}$.
	Again, by optimality of $P_1$, $\lambda(6) = \begin{bmatrix}
		1 \\
		1
	\end{bmatrix}$, but $\lambda$ does not yield a fan.
	Hence there is no $\lambda$ such that $P_1$ is an optimal partition for $(K, \lambda)$.
	The partitions $P_2$ and $P_3$ are the same as $P_1$ since they are obtained after rotating the labels of the hexagon.
	For $P_4$, by using optimality and fan-givingness, one can similarly show that $P_4$ is an optimal partition of $(K, \lambda)$ if and only if $\lambda = \begin{bmatrix}
		1 & 0 & -1 & -1 & 0 & 1 \\
		0 & 1 & 1 & 0 & -1 & -1
	\end{bmatrix}$.
	
	There are in addition three suspended seeds of Picard number~$4$ except the boundary of a $4$-dimensional cross polytope:
	\begin{enumerate}
		\item the suspension of the boundary of a pentagon, and
		\item the suspension of the boundary of $C^4(7)$.
	\end{enumerate}
	For these cases, by the construction of suspension, there is no optimal partition from similar reasons as for the Picard number~$3$ case.
\end{proof}

\section*{Acknowledgements}
The authors are very grateful to Axel Bacher, who introduced the last-named author to CUDA programming allowing Algorithm~\ref{algorithm:main} to fit with GPU computing and terminate in a reasonable time.
The authors also extend their gratitude to an anonymous reviewer for his/her invaluable comments on the originally submitted manuscript, particularly for suggesting a structural change in the introduction and to include additional data to enrich the article's content.

\bibliographystyle{amsplain}
\providecommand{\bysame}{\leavevmode\hbox to3em{\hrulefill}\thinspace}
\providecommand{\MR}{\relax\ifhmode\unskip\space\fi MR }
\providecommand{\MRhref}[2]{%
	\href{http://www.ams.org/mathscinet-getitem?mr=#1}{#2}
}
\providecommand{\href}[2]{#2}

\end{document}